\newdimen\AAdi%
\newbox\AAbo%
\def\AAk#1#2{\s_etbox\AAbo=\hbox{#2}\AAdi=\wd\AAbo\kern#1\AAdi{}}%
\def\AAr#1#2#3{\s_etbox\AAbo=\hbox{#2}\AAdi=\ht\AAbo\raise#1\AAdi\hbox{#3}}%
\font\tenmsb=msbm10 at 12pt \font\sevenmsb=msbm7 at 8pt
\font\fivemsb=msbm5 at 6pt
\def\Bbb#1{{\tenmsb\fam\msbfam#1}}
\newtheorem{thm}{Theorem}[section]
\newtheorem{lem}{Lemma}[section]
\newtheorem{cor}{Corollary}[section]
\newtheorem{rem}{Remark}[section]
\newtheorem{pro}{Proposition}[section]
\newcommand{\ba}{\begin{array}}
\newcommand{\ea}{\end{array}}
\newcommand{\Section}[2]{\setcounter{equation}{0}
\allowdisplaybreaks
\section[#1]{#2}}
\def\n{\nabla}
\def\ir#1{\mathbb R^{#1}}
\def\f#1#2{\frac{#1}{#2}}
\def\grs#1#2{\bold G_{#1,#2}}
\def\dt#1{\frac {d\,#1}{d\,t}}
\def\td{\tilde}
\def\a{\alpha}
\def\be{\beta}
\def\p#1{\partial #1}
\def\de{\delta}
\def\De{\Delta}
\def\ep{\varepsilon}
\def\g{\gamma}
\def\la{\lambda}
\def\La{\Lambda}
\def\om{\omega}
\def\th{\theta}
\def\w{\wedge}
\def\D{\big(\f{d}{dt}-\De\big)}
\def\Hess{\mbox{Hess}}
\def\R{\Bbb{R}}
\def\tr{\mbox{tr}}
\def\ol{\overline}
\def\U{\Bbb{U}}
\def\V{\Bbb{V}}
\def\ra{\rightarrow}
\newcommand{\pB}[2]{\psi^{#1}|\n^{#2}B|^2}
\begin{document}
\title
[Mean curvature flow] {Mean curvature flow via convex functions
on Grassmannian manifolds}

\author
[Y.L. Xin and Ling Yang]{Y. L. Xin and Ling Yang}
\address
{Institute of Mathematics, Fudan University, Shanghai 200433,
China and Key Laboratory of Mathematics for Nonlinear Sciences
(Fudan University), Ministry of Education}
\email{ylxin@fudan.edu.cn}
\thanks{The research was partially supported by
NSFC (No. 10531090) and SFECC}
\begin{abstract}
Using the convex functions in Grassmannian manifolds we can carry
out interior estimates for mean curvature flow of higher
codimension. In this way some of the results in \cite{E-H2} can be
generalized to higher codimension
\end{abstract}

\renewcommand{\subjclassname}{%
  \textup{2000} Mathematics Subject Classification}
\subjclass{53C44}
\date{}
\maketitle

\Section{Introduction}{Introduction}
\medskip

We consider the deformation of a complete submanifold in 
$\ir{m+n}$ under the mean curvature flow. For codimension one 
case there are many deep results given by Ecker-Huisken 
\cite{E-H1}\cite{E-H2}\cite{H1} and \cite{H2}.

In recent years some interesting work has been done for higher
codimensional mean curvature flow
\cite{C-L1}\cite{C-L2}\cite{C-T}\cite{S1}\cite{S2}\cite{S-W}\cite{W1} 
and  \cite{W2}. In a previous paper the first author studied mean 
curvature flow with convex Gauss image \cite{X3}. Some results in 
\cite{E-H1} has been generalized to higher codimensional 
situation. The present work would carry out interior estimates 
and generalize some results in \cite{E-H2} to higher codimension.

For a hypesurface there are  support functions which play an 
important role in gradient estimates for mean curvatute flow of 
codimension one. For general submanifolds we  can also define 
generalized support functions related to the generalized Gauss 
map whose image is the Grassmannian manifold. The Pl\"ucker 
imbedding of the Grassmannian manifold into Euclidean space gives 
us the "height functions" $w$ on the Grassmanian manifold. In the 
case of positive "height function" we can give lower bound of the 
Hessian of $\f{1}{w}$ in our previous paper \cite{X-Y}. Based on 
it we can define auxiliary functions which enable us to carry out 
gradient estimates for MCF in higher codimension from which we  
obtain confinable properties (Theorem 4.1) as well as curvature 
estimates (Theorem 5.1 and Theorem 5.2). In this way, we improve 
the previous results in \cite{X3}.

\Section{Convex functions on Grassmannian manifolds}{Convex
functions on Grassmannian manifolds}

Let $\R^{m+n}$ be an $(m+n)$-dimensional Euclidean space. All
oriented $n$-subspaces constitute the Grassmannian manifolds
$\grs{n}{m}.$

Fix $P_0\in \grs{n}{m}$ in the sequel, which is spanned by a unit
$n-$vector $\ep_1\w\cdots\w\ep_n$. For any $P\in\grs{n}{m}$,
spanned by an $n-$vector $e_1\w\cdots\w e_n$, we define an
important function on $\grs{n}{m}$,
$$w\mathop{=}\limits^{def.}\left<P, P_0\right>
=\left<e_1\wedge\cdots\wedge e_n, \ep_1\wedge\cdots\wedge
\ep_n\right>=\det W,$$ where $W=(\left<e_i, \ep_j\right>).$

Denote
\begin{equation*}
\U=\{P\in \grs{n}{m}:w(P)>0\}.
\end{equation*}
Let $\{\ep_{n+\a}\}$ be $m$ vectors such that
$\{\ep_i,\ep_{n+\a}\}$ form an orthornormal basis of $\ir{m+n}$.
Then we can span arbitrary $P\in \U$ by $n$ vectors $f_i$:
$$f_i=\ep_i+z_{i\a}\ep_{n+\a},$$
where $Z=(z_{i\a})$ are the local coordinates of $P$ in $\Bbb{U}$.
Here and in the sequel we use the summation convention and agree
the range of indices:
$$1\leq i,j\leq n;\qquad 1\leq \a,\be\leq m.$$
The Jordan angles between $P$ and $P_0$ are defined by
$$\th_\a=\arccos(\la_\a),$$
where $\la_\a\geq 0$ and $\la_\a^2$ are the eigenvalues of the
symmetric matrix $W^TW$ . On $\U$ we can define
$$v=w^{-1}.$$
Then it is easily seen that
\begin{equation*}\label{eq3}
v(P)=\big[\det(I_n+ZZ^T)\big]^{\f{1}{2}}=\prod_{\a=1}^m \sec\th_\a.
\end{equation*}

The canonical metric on $\grs{n}{m}$ in the local coordinates can 
be described as (see \cite{X1} Ch. VII)
\begin{equation}\label{metric}
g=\tr\big((I_n+ZZ^T)^{-1}dZ(I_m+Z^TZ)^{-1}dZ^T\big).
\end{equation}

Let $E_{i\a}$ be the matrix with 1 in the intersection of row $i$
and column $\a$ and 0 otherwise. Denote $g_{i\a,j\be}=\left<
E_{i\a},E_{j\be}\right>$ and let $\big(g^{i\a,j\be}\big)$ be the
inverse matrix of $\big(g_{i\a,j\be}\big)$. Then,
\begin{equation*}
(1+\la_i^2)^{\f{1}{2}}(1+\la_\a^2)^{\f{1}{2}}E_{i\a}
\end{equation*}
form an orthonormal basis of $T_P \grs{n}{m}$, where
$\la_\a=\tan\th_\a$. Denote its dual basis in $T_P^* \grs{n}{m}$
by $ \om_{i\a}.$

A lengthy computation yields \cite{X-Y}
\begin{eqnarray}\label{He2}\aligned
\Hess(v)_P&=\sum_{m+1\leq i\leq n,\a}v\ \om_{i\a}^2
+\sum_{\a}(1+\la_\a^2)v\ \om_{\a\a}^2 +v^{-1}\
dv\otimes dv\\
&\qquad\qquad+\sum_{\a<\be}\Big[(1+\la_\a\la_\be)v\Big(\f{\sqrt{2}}{2}(\om_{\a\be}+\om_{\be\a})\Big)^2\\
&\hskip1in+(1-\la_\a\la_\be)v\Big(\f{\sqrt{2}}{2}(\om_{\a\be}-\om_{\be\a})\Big)^2\Big].
\endaligned
\end{eqnarray}
Define
$$B_{JX}(P_0)=\big\{P\in \U:\mbox{ sum of any two Jordan angles}$$
$$\hskip2.4in\mbox{between }P\mbox{ and }P_0<\f{\pi}{2}\big\}.$$
This is a geodesic convex set, larger than the geodesic ball of
radius $\frac{\sqrt{2}}{4}\pi$ and centered at $P_0$. This  was
found in a  previous work of Jost-Xin \cite{J-X}. For any real
number $a$ let $\V_a=\{P\in\grs{n}{m},\quad v(P)<a\}.$ From
(\cite{J-X}, Theorem 3.2) we know that
$$\V_2\subset B_{JX}\qquad \text{and}\qquad
\ol{\V}_2\cap\ol{B}_{JX}\neq\emptyset$$

$\Hess(v)_P$ is positive definite if and only if
$\th_\a+\th_\be<\f{\pi}{2}$ for arbitrary $\a\neq \be$, i.e.,
$P\in B_{JX}(P_0)$.

From (\ref{He2}) it is easy to get an estimate
\begin{equation*}\label{es1}
\Hess(v)\geq v(2-v)g+v^{-1}dv\otimes dv\qquad \mbox{on }\ol{\V}_2.
\end{equation*}

For later applications the above estimate is not accurate enough.
Using the radial compensation technique the estimate could be
refined.

\begin{thm}\cite{X-Y}

$v$ is a convex function on $B_{JX}(P_0)\subset \U\subset
\grs{n}{m}$,  and
\begin{equation*}\label{es4}
\Hess(v)\geq
v(2-v)g+\Big(\f{v-1}{pv(v^{\f{2}{p}}-1)}+\f{p+1}{pv}\Big)dv\otimes
dv
\end{equation*}
on $\ol{\V}_2$, where $g$ is the metric tensor on $\grs{n}{m}$ and
$p=min(n,m)$.
\end{thm}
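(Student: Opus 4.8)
The plan is to work at a fixed $P\in B_{JX}(P_0)$ in the orthonormal coframe $\{\om_{i\alpha}\}$ adapted to the Jordan angles, in which $g=\sum_{i,\alpha}\om_{i\alpha}^2$; a short computation from $v=\prod_\alpha\sec\th_\alpha$ and the definition of the coframe gives $dv=v\sum_\alpha\la_\alpha\,\om_{\alpha\alpha}$, the sum running over the $p=\min(n,m)$ directions attached to the Jordan angles ($\la_\alpha=\tan\th_\alpha$). Hence $dv\otimes dv$ is supported on the subspace $D:=\mathrm{span}\{\om_{\alpha\alpha}\}\subset T_P^*\grs{n}{m}$, while by (\ref{He2}) both $g$ and $\Hess(v)-v^{-1}dv\otimes dv$ are block diagonal for the splitting $D\oplus D^\perp$. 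Therefore the asserted inequality
\[
\Hess(v)-v(2-v)\,g-c(v)\,dv\otimes dv\ \geq\ 0,\qquad c(v)=\f{v-1}{pv(v^{2/p}-1)}+\f{p+1}{pv},
\]
splits into its restrictions to $D^\perp$ and to $D$, which I would treat separately.

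On $D^\perp$ the term $dv\otimes dv$ is absent, and by (\ref{He2}) the restriction of $\Hess(v)$ is diagonal with entries $v$ and $(1\pm\la_\alpha\la_\be)v$ for $\alpha\neq\be$. So one only needs $v\geq v(2-v)$, i.e.\ $v\geq1$, and $(1-\la_\alpha\la_\be)v\geq v(2-v)$, i.e.\ $v\geq1+\la_\alpha\la_\be$; the latter follows from $1+\la_\alpha\la_\be=\f{\cos(\th_\alpha-\th_\be)}{\cos\th_\alpha\cos\th_\be}\leq\sec\th_\alpha\sec\th_\be\leq\prod_\g\sec\th_\g=v$.

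The real content is on $D$. There $g|_D=I_p$, $dv\otimes dv|_D=v^2\,\la\la^T$ with $\la=(\la_1,\dots,\la_p)^T$, and $\Hess(v)|_D=v\,\mathrm{diag}(1+\la_\alpha^2)+v\,\la\la^T$; writing $\bar\mu:=c(v)v-1=\f{v^{2/p}+v-2}{p(v^{2/p}-1)}$, the claim on $D$ reduces to the $p\times p$ matrix inequality $\mathrm{diag}(v-1+\la_\alpha^2)-\bar\mu\,\la\la^T\geq0$. Conjugating by the diagonal matrix with entries $(v-1+\la_\alpha^2)^{-1/2}$ turns the left-hand side into $I_p-\bar\mu\,bb^T$ with $|b|^2=\sum_\alpha\la_\alpha^2/(v-1+\la_\alpha^2)$, so the claim is equivalent to the scalar inequality
\[
\sum_{\alpha=1}^{p}\f{\la_\alpha^2}{v-1+\la_\alpha^2}\ \leq\ \f1{\bar\mu}=\f{p(v^{2/p}-1)}{v^{2/p}+v-2}.
\]

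I would prove this last inequality by optimizing over the Jordan angles with $v$ held fixed. Put $u_\alpha=1+\la_\alpha^2=\sec^2\th_\alpha\ (\geq1)$, so $\prod_\alpha u_\alpha=v^2$ and $\la_\alpha^2/(v-1+\la_\alpha^2)=1-(v-1)/(u_\alpha+v-2)$; substituting and using $v>1$, the inequality becomes $\sum_\alpha(u_\alpha+v-2)^{-1}\geq p\,(v^{2/p}+v-2)^{-1}$. With $x_\alpha=\log u_\alpha\geq0$ (hence $\sum x_\alpha=2\log v$) this is exactly $\sum_\alpha h(x_\alpha)\geq p\,h\!\big(\tfrac1p\sum x_\alpha\big)$ for $h(x)=(e^x+v-2)^{-1}$, i.e.\ Jensen's inequality once $h$ is convex on $\{x\geq0\}$; and $h''(x)=e^x\big(e^x-(v-2)\big)\big(e^x+v-2\big)^{-3}>0$ there, because $1\leq v\leq2$ on $\ol{\V}_2$ makes both $e^x-(v-2)$ and $e^x+v-2$ positive for every $x\geq0$. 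Equality occurs at the round configuration $\th_1=\dots=\th_p$. The degenerate point $P_0$ ($v=1$, $dv=0$) is covered in the limit, since $c$ extends continuously there and (\ref{He2}) gives $\Hess(v)_{P_0}=g=v(2-v)g$. I expect the main obstacle to be precisely this last step: choosing the substitution that simultaneously linearizes the constraint $\prod_\alpha u_\alpha=v^2$ and convexifies the summand, and then checking $h''>0$ throughout $\ol{\V}_2$, where $v-2$ may be negative.
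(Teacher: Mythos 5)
Your argument is correct, and it is the kind of argument the paper itself points to when it says the coarse bound is "refined" by "the radial compensation technique": the paper does not reproduce a proof here (it cites \cite{X-Y}), but the content of that phrase is precisely your split of $T_P^*\grs{n}{m}$ into the "radial" block $D=\mathrm{span}\{\om_{\a\a}\}$ carrying $dv$ and its complement, followed by an optimization over Jordan-angle configurations with $v$ fixed. I checked the details: $dv=v\sum_\a\la_\a\om_{\a\a}$ follows from $dz_{\a\a}=(1+\la_\a^2)\om_{\a\a}$ together with $d\log v=\sum_\a\f{\la_\a}{1+\la_\a^2}\,dz_{\a\a}$; (\ref{He2}) is indeed block diagonal for $D\oplus D^\perp$; on $D^\perp$ the binding eigenvalue is $(1-\la_\a\la_\be)v$, and $1+\la_\a\la_\be=\f{\cos(\th_\a-\th_\be)}{\cos\th_\a\cos\th_\be}\le v$ does the job (the $(1+\la_\a\la_\be)v$ eigenvalue is trivially $\ge v(2-v)$ once $v\ge 1$); on $D$ the reduction to $\sum_\a(u_\a+v-2)^{-1}\ge p(v^{2/p}+v-2)^{-1}$ under $\prod u_\a=v^2$, $u_\a\ge 1$, is exact, and your choice $x_\a=\log u_\a$, $h(x)=(e^x+v-2)^{-1}$ linearizes the constraint and gives $h''(x)=e^x(e^x-(v-2))(e^x+v-2)^{-3}>0$ on $\{x\ge 0\}$ precisely because $1\le v\le 2$, so Jensen closes the argument with equality at $\th_1=\cdots=\th_p$. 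The degenerate case $v=1$ (all $\la_\a=0$, $dv=0$) is handled correctly by continuity of $c(v)$ and by $\Hess(v)_{P_0}=g$. This is a clean, complete proof of the stated Hessian estimate.
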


\begin{rem}
For any $a\le 2$, the sub-level set $\V_a$ is a convex set in
$\grs{n}{m}$.
\end{rem}

\begin{rem}
The sectional curvature varies in $[0,2]$ under the canonical
Riemannian metric. By the standard Hessian comparison theorem we
have
$$\text{Hess}(\rho)\ge \sqrt{2}\,\cot(\sqrt{2}\rho)(g-d \rho\otimes d
\rho),$$ where $\rho$ is the distance function from a fixed point
in $\grs{n}{m}$.
\end{rem}

\Section{Evolution equations}{Evolution equations}
\medskip

Let $M$ be a complete  $n-$submanifold in $\ir{m+n}.$ Consider the
deformation of $M$ under the mean curvature flow, i.e. $\exists$ a
one-parameter family $F_t=F(\cdot, t)$ of immersions
$F_t:M\to\ir{m+n}$ with corresponding images $M_t=F_t(M)$ such 
that
\begin{equation}\begin{aligned}
\dt{}F(x, t)&=H(x, t),\quad x\in M\\
F(x, 0)&=F(x),\end{aligned} \label{mcf}
\end{equation}
where $H(x, t)$ is the mean curvature vector of $M_t$ at  $F(x, t).$

From  equation (\ref{mcf}) it is easily known that
\begin{equation}
\left(\dt{}-\De\right)|F|^2=-2n.
\end{equation}

Let $B$ denote the second fundamental form of $M_t$ in $\ir{m+n}$.
It satisfies the evolution equation
\begin{lem}(Lemma 3.1 in \cite{X3})
\begin{equation}
\left(\dt{}-\De\right)|B|^2\le - \,2\,|\n|B||^2 + 3|B|^4\label{B}.
\end{equation}
\end{lem}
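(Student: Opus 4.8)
The plan is to derive the evolution inequality for $|B|^2$ from Simons-type identities for submanifolds of Euclidean space evolving under mean curvature flow. First I would recall the standard evolution equation for the second fundamental form itself. Writing the flow in local coordinates, one computes $\left(\dt{}-\De\right)h_{ij}^\a$ where $h_{ij}^\a=\left<B(e_i,e_j),\nu_\a\right>$ for a local orthonormal frame $\{\nu_\a\}$ of the normal bundle. In the ambient Euclidean case (flat background, so all curvature terms come only from $B$ itself) this takes the schematic form $\left(\dt{}-\De\right)h_{ij}^\a = B*B*B$, i.e. the time derivative minus the rough Laplacian equals a cubic contraction of $B$ with itself coming from the Gauss, Codazzi and Ricci equations together with the commutation of derivatives. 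I would cite this as the codimension-$n$ analogue of Huisken's computation (and note that the detailed version is what Lemma 3.1 of \cite{X3} establishes), rather than rederiving it.

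Next I would pass from $h_{ij}^\a$ to $|B|^2 = \sum_{i,j,\a}(h_{ij}^\a)^2$. Applying the parabolic operator and using the Bochner-type identity
\begin{equation*}
\left(\dt{}-\De\right)|B|^2 = 2\left<B,\left(\dt{}-\De\right)B\right> - 2|\n B|^2,
\end{equation*}
the middle term produces $-2|\n B|^2$ and the cubic terms produce something bounded by $C|B|^4$ for a dimensional constant $C$; the content of the lemma is that the precise constant is $3$. To get the sharp $3$ one groups the reaction terms into the two standard pieces — one involving $\sum_{\a,\be}\left<B^\a,B^\be\right>^2$ (where $B^\a$ is the matrix $(h_{ij}^\a)$) and one involving $\sum\|[B^\a,B^\be]\|^2$ — and bounds each by $|B|^4$, exactly as in the Li--Li / Chen--Tian estimates; their sum is at most $3|B|^4$. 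Then I would replace $|\n B|^2$ by $|\n|B||^2$ using Kato's inequality $|\n B|^2 \ge |\n|B||^2$, which costs a sign and upgrades $-2|\n B|^2$ to $-2|\n|B||^2$, yielding precisely \eqref{B}.

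The main obstacle is the bookkeeping in the reaction term: in higher codimension the normal bundle is nontrivial, so one must carry the normal connection through the commutation formulas and keep track of the Ricci-equation contribution (the curvature of the normal bundle, which again is quadratic in $B$). Organizing these so that the total cubic term is controlled by $3|B|^4$ rather than a larger dimensional constant is the delicate point; it relies on the algebraic inequalities $\sum_{\a,\be}\left<B^\a,B^\be\right>^2 \le |B|^4$ and $\sum_{\a,\be}\|[B^\a,B^\be]\|^2 \le 2|B|^4$ for families of symmetric matrices. Everything else — the evolution of the metric under the flow, the formula $\dt{}\nu_\a$, the commutation of $\dt{}$ with covariant differentiation — is routine and can be quoted from \cite{X3}. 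Since the paper explicitly attributes the statement to Lemma 3.1 of \cite{X3}, I would present the argument as a brief recollection of that computation together with the Kato inequality step, rather than a full independent derivation.
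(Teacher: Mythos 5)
The paper itself offers no proof of this lemma; it is simply quoted as Lemma 3.1 of \cite{X3}, so there is no in-text derivation to compare yours against. You note this correctly, and your overall strategy — the Simons-type evolution equation for $B$, the Bochner identity for $|B|^2$, and Kato's inequality to pass from $|\n B|^2$ to $|\n|B||^2$ — is indeed the standard route and matches what \cite{X3} must be doing. The Kato step is fine as stated, since $|\n B|^2\ge |\n|B||^2$ gives $-2|\n B|^2\le -2|\n|B||^2$.

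There is, however, a quantitative gap in your treatment of the reaction term, and it is exactly at the place you yourself flag as ``the delicate point.'' Writing $B^\a=(h_{ij}^\a)$ and setting $R_1=\sum_{\a,\be}\left<B^\a,B^\be\right>^2$, $R_2=\sum_{\a,\be}\|[B^\a,B^\be]\|^2$, the evolution equation in a flat ambient space has an overall factor of $2$ on the reaction part:
\begin{equation*}
\Big(\dt{}-\De\Big)|B|^2=-2|\n B|^2+2R_1+2R_2.
\end{equation*}
(Sanity check against codimension one: there $R_2=0$, $R_1=|A|^4$, and this reduces to Huisken's $\big(\dt{}-\De\big)|A|^2=-2|\n A|^2+2|A|^4$.) The separate bounds you cite, $R_1\le |B|^4$ and $R_2\le 2|B|^4$, give only $2R_1+2R_2\le 6|B|^4$ — twice the constant in the lemma. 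What is actually needed, and what the Li--Li/Chern--do Carmo--Kobayashi circle of estimates supplies, is the \emph{joint} inequality
\begin{equation*}
R_1+R_2\ \le\ \tfrac{3}{2}\,|B|^4
\end{equation*}
for a family of symmetric matrices. This is strictly sharper than the sum of the two Cauchy--Schwarz bounds, and it is precisely what yields $2R_1+2R_2\le 3|B|^4$. So the missing ingredient is the sharp Li--Li estimate applied to the sum, not to the two pieces separately; as written your proposal loses a factor of $2$ and does not establish the stated constant.
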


The  Gauss map $ \gamma : M \to \grs{n}{m} $ is defined by
$$
 \g (x) = T_x M \in \grs{n}{m}
$$
via the parallel translation in $ \ir{m+n} $ for $ \forall x \in
M $. The Gauss maps under the MCF satisfies the following
relation.
\begin{pro}\cite{W2}\label{wang2}
\begin{equation}
\dt{\g}=\tau(\g(t)),\label{prv}
\end{equation}
where $\tau(\g(t))$ is the tension fields of the Gauss map from
$M_t$.
\end{pro}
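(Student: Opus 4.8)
The plan is to identify the mean curvature flow of $M_t$ in $\ir{m+n}$ with the harmonic map heat flow of its Gauss map $\g_t : M_t \to \grs{n}{m}$, so that the stated identity $\dt{\g} = \tau(\g(t))$ becomes the statement that the tension field of the Gauss map equals the velocity of the Gauss map along the flow. First I would fix a point $x\in M$ and a time $t$, and choose a local orthonormal frame $\{e_i\}_{i=1}^n$ for $T_xM_t$ together with a normal frame $\{\nu_\a\}_{\a=1}^m$, arranged to be geodesic (i.e.\ $\n^{M_t}_{e_i}e_j = 0$ and $\n^\perp_{e_i}\nu_\a = 0$) at the chosen point. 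Writing $\g(x,t) = e_1\w\cdots\w e_n \in \grs{n}{m}$ under the Pl\"ucker picture, the differential $d\g$ acting on $e_i$ is expressed through the second fundamental form $B$, namely $d\g(e_i) = \sum_{j,\a} \lan B(e_i,e_j),\nu_\a\ran\, (e_1\w\cdots\w\nu_\a\w\cdots\w e_n)$ with $\nu_\a$ in the $j$-th slot. This is the classical formula identifying $d\g$ with $B$ up to the standard identification $T_P\grs{n}{m}\cong \mathrm{Hom}(P, P^\perp)$.

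Next I would compute the tension field $\tau(\g) = \tr_{M_t}\n d\g = \sum_i (\n_{e_i} d\g)(e_i)$ at the chosen point. Differentiating the expression for $d\g(e_i)$ and using the Codazzi equations in $\ir{m+n}$ (the ambient space is flat, so $\n B$ is totally symmetric) together with the geodesic frame conditions, the trace collapses to the term carrying $\sum_i (\n_{e_i} B)(e_i, e_j) = (\n_{e_j} H)$, so that $\tau(\g) = \sum_{j,\a}\lan \n_{e_j}H,\nu_\a\ran (\cdots \nu_\a \cdots)$ plus lower-order terms that must be checked to cancel. This is essentially the computation behind Ruh--Vilms: when $M_t$ is minimal, $H\equiv 0$ and $\tau(\g)=0$, so $\g$ is harmonic; here we need the time-dependent refinement.

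For the time derivative, I would differentiate $\g(x,t) = e_1(x,t)\w\cdots\w e_n(x,t)$ in $t$, where $e_i(x,t)$ is obtained by parallel transport of $T_xM_t$ in $\ir{m+n}$. Using the flow equation $\dt{}F = H$, the variation of the tangent frame under the deformation of the immersion is governed by $\dt{} e_i = \n^{M_t}_{e_i}H - \lan B(e_i, e_j), \text{(something)}\ran e_j$; more precisely $\dt{} e_i$ decomposes into a tangential part (which corresponds to a reparametrization of $M$ and does not affect $\g$ as a point of $\grs{n}{m}$) and a normal part equal to $\sum_\a \lan \n_{e_i}H,\nu_\a\ran\nu_\a$. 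Substituting this into $\dt{\g} = \sum_i e_1\w\cdots\w(\dt{}e_i)\w\cdots\w e_n$ yields exactly the same normal-component expression $\sum_{j,\a}\lan\n_{e_j}H,\nu_\a\ran(\cdots\nu_\a\cdots)$ that we obtained for $\tau(\g)$, which proves the identity.

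The main obstacle is bookkeeping rather than conceptual: one must carefully separate the tangential (reparametrization) contributions from the normal contributions in both $\dt{\g}$ and $\tau(\g)$, and verify that all the cross terms involving $B$ alone (without a derivative of $B$) genuinely cancel. This cancellation is forced by the flatness of $\ir{m+n}$ via Codazzi, but making the identification $T_P\grs{n}{m}\cong\mathrm{Hom}(P,P^\perp)$ precise — and tracking how the induced Levi-Civita connection on $\grs{n}{m}$ pulls back along $\g$ — is where the care is needed. Since the statement is quoted from \cite{W2}, I would present the computation compactly, emphasizing the Ruh--Vilms identification and the structure of the cancellation, and refer to \cite{W2} for the full details.
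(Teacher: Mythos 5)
The paper offers no proof of Proposition~\ref{wang2} — it is quoted from Wang \cite{W2} and used as a black box — so there is no in-text argument to compare against, and your proposal must be judged on its own merits. The strategy you outline is the standard Ruh--Vilms computation and is the correct one: identify $d\g(e_i)$ with $B(e_i,\cdot)$ under $T_P\grs{n}{m}\cong\mathrm{Hom}(P,P^\perp)$, trace and apply Codazzi to express $\tau(\g)$ through $\n^\perp H$, then compare with $\dt{\g}$ computed from the moving tangent frame. One garbled step: the display $\dt{}e_i=\n^{M_t}_{e_i}H-\lan B(e_i,e_j),\ \cdot\ \ran e_j$ does not parse as written, since $H$ is a normal field so $\n^{M_t}_{e_i}H$ is undefined and the second slot of the inner product is left blank; what you actually use, and state correctly in the following sentence, is that the tangential part of $\dt{}e_i$ is a frame rotation annihilated upon wedging into $\g=e_1\w\cdots\w e_n$, while its normal part equals $\n^\perp_{e_i}H$, and this is all the argument requires. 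Beyond that you rightly flag that the remaining work is verifying the cancellation of the zeroth-order $B$-terms in $\n d\g$ and tracking the pulled-back Levi--Civita connection on $\grs{n}{m}$; this is genuine bookkeeping, but since the paper itself simply cites \cite{W2}, deferring the details there, as you propose, is the appropriate level of rigor here.
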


Let $h:\V\to\ir{}$ be a smooth function defined on an open subset
$\V\subset G_{n,m}$ and denote $\td{h}=h\circ \g$, then
\begin{equation*}
\f{d\td{h}}{dt}=\f{d(h\circ\g)}{dt}=dh\big(\tau(\g)\big).
\end{equation*}
On the other hand, by the composition formula
\begin{equation*}
\De \td{h}=\De(h\circ \g)=\Hess(h)(\g_* e_i,\g_* e_i)\circ
\g+dh\big(\tau(\g)\big),
\end{equation*}
where $\{e_i\}$ is a local orthonormal frame field on $M_t$; and
then we derive
\begin{equation}\label{j}
\big(\f{d}{dt}-\De\big)\td{h}=-\Hess(h)(\g_* e_i,\g_* e_i)\circ
\g.
\end{equation}

\Section{Confinable properties}{Confinable properties}
\medskip

Now, we consider the convex  Gauss image situation which is
preserved under the flow, so called confinable property.

Let $r:\R^{n+m}\times \R\ra \R$ be a smooth, nonnegative function,
such that for any $R>0$,
\begin{equation*}
\ol{M}_{t,R}=\big\{x\in M_t: r(x,t)\leq R^2\big\}
\end{equation*}
is compact.

\begin{lem}\label{l1}
Assume $r$ satisfies $\big(\f{d}{dt}-\De\big)r\geq 0$. Let $R>0$,
such that $\g(\ol{M}_{0,R})\subset \V\subset\grs{n}{m}$.
 Define $\varphi=R^2-r$ and $\varphi_+$ denotes the positive part of
$\varphi$. $h:\V\to\ir{}$ is a smooth positive function such that
\begin{equation}\label{m}
\Hess(h)\geq Ch^{-1}dh\otimes dh
\end{equation}
with $C\geq \f{3}{2}$. Then we have the estimate
$$\td{h}\varphi_+^2\leq \sup_{\ol{M}_{0,R}}\td{h}\varphi_+^2,$$
where $\td{h}=h\circ \g$.
\end{lem}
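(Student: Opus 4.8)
The plan is to use the maximum principle applied to the function $\td h\,\varphi_+^2$ on the (compact) support of $\varphi_+$, following the Ecker--Huisken style gradient estimate. First I would fix $T>0$ arbitrary and argue on $M\times[0,T]$; since $\{r\le R^2\}$ is compact and $\varphi_+$ vanishes outside it, the supremum of $\td h\,\varphi_+^2$ is attained at some interior point $(x_0,t_0)$ where $\varphi>0$ (if the sup is attained only at $t=0$ or where $\varphi=0$ we are done). At such a point the Gauss image $\g(x_0)$ lies in $\V$ by the hypothesis on $\g(\ol M_{0,R})$ together with the fact (to be justified) that the region $\g(\ol M_{t,R})$ does not escape $\V$ before the first time the conclusion could fail — this is precisely the confinable mechanism, so I would run the argument as: let $t_0$ be the first time the sup over earlier times is exceeded and derive a contradiction via $\big(\f{d}{dt}-\De\big)(\td h\,\varphi_+^2)\le 0$ at the maximum.

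The key computation is the evolution inequality for $\td h\,\varphi_+^2$. Using the product rule for the operator $\D=\big(\f{d}{dt}-\De\big)$,
\begin{equation*}
\D(\td h\,\varphi^2)=\varphi^2\,\D\td h+\td h\,\D(\varphi^2)-2\,\lan\n\td h,\n(\varphi^2)\ran .
\end{equation*}
For the first term I invoke (\ref{j}): $\D\td h=-\Hess(h)(\g_*e_i,\g_*e_i)\circ\g\le -Ch^{-1}|\n\td h|^2$ by hypothesis (\ref{m}), since $\n\td h=dh\circ\g_*$ gives $|\n\td h|^2=\sum_i (dh(\g_*e_i))^2$ and $\sum_i\Hess(h)(\g_*e_i,\g_*e_i)\ge C h^{-1}\sum_i(dh(\g_*e_i))^2$. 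For the second term, $\D(\varphi^2)=2\varphi\,\D\varphi-2|\n\varphi|^2=-2\varphi\,\big(\f{d}{dt}-\De\big)r-2|\n\varphi|^2\le -2|\n\varphi|^2$ wherever $\varphi>0$ (using $\n\varphi=-\n r$ and the hypothesis $\D r\ge0$). Putting these together and bounding the cross term by Cauchy--Schwarz, $-2\lan\n\td h,\n(\varphi^2)\ran=-4\varphi\lan\n\td h,\n\varphi\ran\le 4\varphi\,|\n\td h|\,|\n\varphi|\le \f{2}{3}\varphi^2 h^{-1}|\n\td h|^2\cdot\f{3}{2}h+ \text{(absorbed)}$ — more precisely I would use $4\varphi|\n\td h||\n\varphi|\le \f{3}{C}\cdot\f{C}{\ } \ldots$; the clean way is $4\varphi|\n\td h||\n\varphi|\le 2\td h\,|\n\varphi|^2+2C^{-1}h^{-1}\varphi^2|\n\td h|^2$ is not quite it, so instead use the weighted Young inequality $4\varphi|\n\td h||\n\varphi|\le \f{2}{\mu}\td h\,|\n\varphi|^2+2\mu\,h^{-1}\varphi^2|\n\td h|^2$ with $\mu$ chosen so that $2\mu\le C$, e.g. $\mu=1\le\f{3}{2}\le C$. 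Then at $(x_0,t_0)$,
\begin{equation*}
0\le\D(\td h\,\varphi^2)\le -C\varphi^2 h^{-1}|\n\td h|^2-2\td h\,|\n\varphi|^2+2\td h\,|\n\varphi|^2+2h^{-1}\varphi^2|\n\td h|^2=(2-C)h^{-1}\varphi^2|\n\td h|^2\le 0,
\end{equation*}
so every inequality is an equality; since $C\ge\f32>2$ is false — wait, $C\ge\f32$ does not give $C\ge2$, so I must instead balance more carefully: choose $\mu$ with $2\mu=C$, i.e.\ $\mu=C/2\ge 3/4$, giving cross-term bound $\f{2}{\mu}\td h|\n\varphi|^2+C h^{-1}\varphi^2|\n\td h|^2=\f{4}{C}\td h|\n\varphi|^2+Ch^{-1}\varphi^2|\n\td h|^2$, and then $\D(\td h\varphi^2)\le -C\varphi^2h^{-1}|\n\td h|^2-2\td h|\n\varphi|^2+\f{4}{C}\td h|\n\varphi|^2+Ch^{-1}\varphi^2|\n\td h|^2=(\f4C-2)\td h|\n\varphi|^2\le 0$ precisely because $C\ge2$ — which still needs $C\ge2$. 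Since the paper only assumes $C\ge\f32$, the correct split keeps a good fraction of the $-C\varphi^2h^{-1}|\n\td h|^2$ term: write $4\varphi|\n\td h||\n\varphi|\le \varepsilon h^{-1}\varphi^2|\n\td h|^2+\f{4}{\varepsilon}\td h|\n\varphi|^2$ and keep $\D(\td h\varphi^2)\le (\varepsilon-C)h^{-1}\varphi^2|\n\td h|^2+(\f4\varepsilon-2)\td h|\n\varphi|^2$; choosing $\varepsilon=2$ the second coefficient vanishes and the first is $2-C\le\f12-\ldots<0$ — here $C\ge\f32$ suffices since $2-\f32=\f12$, hmm that is positive. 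So choose $\varepsilon<2$, say $\varepsilon=\f32$: coefficients $(\f32-C)\le 0$ and $(\f83-2)=\f23>0$, bad. The resolution: one must use that $C\ge\f32$ exactly to make $\varepsilon-C\le0$ with $\varepsilon=\f32$, and absorb the leftover $|\n\varphi|^2$ term using the extra room — this is the delicate bookkeeping step, done e.g.\ by noting $4\varphi|\n\td h||\n\varphi|\le \f32 h^{-1}\varphi^2|\n\td h|^2+\f83\td h|\n\varphi|^2$ is wasteful and instead keeping a two-parameter split $\varepsilon h^{-1}\varphi^2|\n\td h|^2+\delta^{-1}\ldots$; I will choose $\varepsilon=C$ and observe $\f4C\le\f83\le 2$ fails, so ultimately the genuinely correct inequality uses $4ab\le \f{C}{c}a^2\cdot c+\ldots$ — in any case the point is that $C\ge\f32$ is exactly the threshold that makes the quadratic form $-C h^{-1}\varphi^2 p^2 - 2\td h q^2 + 4\varphi p q$ (with $p=|\n\td h|$, $q=|\n\varphi|$) negative semidefinite, which holds iff $16\varphi^2\le 8C\td h h^{-1}\varphi^2$ — but $\td h=h\circ\g$ so $\td h h^{-1}$ is not $1$ — hence the genuine form after substituting $|\n\td h|^2=\sum(dh(\g_*e_i))^2$ and using that at the max $\n(\td h\varphi^2)=0$, i.e.\ $\varphi^2\n\td h=-2\td h\varphi\n\varphi$, which lets me \emph{eliminate} $\n\varphi$ in favour of $\n\td h$.

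The right approach, and the one I would actually carry out, is: at the interior maximum $(x_0,t_0)$ we have $\n(\td h\,\varphi^2)=0$, hence $\varphi\,\n\td h=-2\,\td h\,\n\varphi$, so $|\n\varphi|^2=\f{\varphi^2|\n\td h|^2}{4\td h^2}$. Substituting into the evolution inequality $0\le\D(\td h\varphi^2)=\varphi^2\D\td h+\td h\D(\varphi^2)-2\lan\n\td h,\n\varphi^2\ran$ and using $-2\lan\n\td h,\n\varphi^2\ran=\f{1}{\td h}|\n(\td h\varphi^2)|^2\cdot(\ldots)$ — concretely $-2\lan\n\td h,\n(\varphi^2)\ran = 2\,\td h\,|\n\varphi|^2\cdot(\text{from }\n\td h=-2\td h\varphi^{-1}\n\varphi)$ after simplification one gets all remaining terms proportional to $h^{-1}\varphi^2|\n\td h|^2$ with total coefficient $-C+2+\text{something}\le 0$ exactly when $C\ge\f32$, yielding $\D(\td h\varphi^2)\le0$ strictly unless $\n\td h=0$ there, in which case $\td h$ is locally the relevant constant. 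Either way $\D(\td h\varphi^2)\le0$ at the maximum, and the parabolic maximum principle on $\ol M_{t,R}$ (compact, $\varphi_+$ compactly supported) forces $\sup_{M\times[0,T]}\td h\varphi_+^2\le\sup_{M\times\{0\}}\td h\varphi_+^2=\sup_{\ol M_{0,R}}\td h\varphi_+^2$. Letting $T\to\infty$ gives the claim.

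The main obstacle is exactly the coefficient bookkeeping described above: showing that the hypothesis $C\ge\f32$ is the sharp constant making the relevant quadratic form in $(|\n\td h|,|\n\varphi|)$ — after using the first-order condition at the maximum and $\D r\ge0$ — negative semidefinite. A secondary technical point is justifying that the Gauss image stays inside $\V$ up to time $t_0$ (so that $\Hess(h)$ and hypothesis (\ref{m}) are available there); this is handled by taking $t_0$ to be the first time at which $\td h\varphi_+^2$ would exceed its initial supremum, and noting that on $[0,t_0)$ the bound $\td h\varphi_+^2\le\sup_{\ol M_{0,R}}\td h\varphi_+^2$ together with positivity of $h$ keeps $\g(\ol M_{t,R})$ in a compact sublevel set of $\td h$ inside $\V$ — and then the computation at $t_0$ gives the contradiction. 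One should also note that $\varphi_+^2$ is only $C^1$, not $C^2$, across $\{\varphi=0\}$, but since $\varphi>0$ at any interior maximum with positive value this causes no difficulty; alternatively one replaces $\varphi_+^2$ by a smooth approximation.
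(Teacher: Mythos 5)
Your final strategy --- evaluate $\D(\td h\varphi^2)$ at an interior space--time maximum and use the first--order condition $\n(\td h\varphi^2)=0$, i.e.\ $\varphi\,\n\td h=-2\td h\,\n\varphi$, to eliminate $\n\varphi$ --- is correct and does give the sharp threshold $C\ge\f32$. One factor should be fixed: the cross term is
$-2\lan\n\td h,\n(\varphi^2)\ran=-4\varphi\lan\n\td h,\n\varphi\ran=8\td h|\n\varphi|^2$,
not $2\td h|\n\varphi|^2$. With $|\n\varphi|^2=\varphi^2|\n\td h|^2/(4\td h^2)$ the three contributions to $\D(\td h\varphi^2)$, in units of $\td h^{-1}\varphi^2|\n\td h|^2$, are $-C$ (from $\varphi^2\D\td h$), $-\f12$ (from $\td h\D(\varphi^2)\le-2\td h|\n\varphi|^2$ since $\D r\ge0$ and $\varphi>0$), and $+2$ (from the cross term), totalling $\f32-C\le0$; your factor of $2$ would have given $-C$ alone and made any $C>0$ work, which is why it does not match the advertised threshold.

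The paper reaches the same arithmetic but organizes it as a differential inequality valid on the whole support of $\varphi_+$ rather than only at a maximum point: it writes $-2\n\td h\cdot\n\eta=(2C-2)\n\td h\cdot\n\eta-2C\n\td h\cdot\n\eta$, substitutes $\n\td h=\eta^{-1}\bigl(\n(\td h\eta)-\td h\n\eta\bigr)$ in the first piece to isolate the term $(2C-2)\eta^{-1}\n\eta\cdot\n(\td h\eta)$, applies Young's inequality only to the second piece, and uses the identities $\eta^{-1}(\eta')^2=4$, $\eta''=2$ to arrive at $\D(\td h\eta)\le(2C-2)\eta^{-1}\n\eta\cdot\n(\td h\eta)+(6-4C)\td h|\n r|^2$. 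The weak parabolic maximum principle is then invoked once. This is the cleaner route: it makes no reference to a particular maximum point, so the $C^{1,1}$ regularity of $\varphi_+^2$ across $\{\varphi=0\}$ and the need to justify existence of an interior maximum are avoided; and it sidesteps the exploratory Young's--inequality hunting in the middle of your write-up, which, as you correctly observed, gives only $C\ge2$ unless one first extracts the $\n(\td h\eta)$ contribution. The two proofs are the same mechanism --- your first--order substitution is exactly what makes the paper's isolated gradient term vanish at a maximum --- so the difference is one of bookkeeping rather than idea. Your remark that the hypothesis only constrains $\g(\ol M_{0,R})$ and that one must ensure $\g$ stays in $\V$ for the computation to make sense is a fair point; the paper leaves this implicit, and in its applications $h\to\infty$ at $\partial\V$, which is what closes the loop.
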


\begin{proof} Denote $\eta=\varphi_+^2$, then at an arbitrary
interior point of the support of $\varphi_+$, we have
\begin{equation}\label{n}
\eta'\leq 0,\ \eta^{-1}\big(\eta'\big)^2=4,\mbox{ and }\eta''=2,
\end{equation}
where $'$ denotes differentiation with respect to $r$. By (\ref{m}),
(\ref{j}), we have
\begin{equation}\label{d1}
\big(\f{d}{dt}-\De\big)\td{h}\leq -C{\td{h}}^{-1}|\n \td{h}|^2
\end{equation}
and moreover
\begin{eqnarray}
&&\big(\f{d}{dt}-\De\big)(\td{h}\eta)\nonumber\\
&=&\big(\f{d}{dt}-\De\big)\td{h}\cdot \eta+\td{h}\D\eta-2\n\td{h}\cdot \n\eta\nonumber\\
&\leq&-C{\td{h}}^{-1}|\n \td{h}|^2\eta+\td{h}\Big(\eta'\D r-\eta''|\n r|^2\Big)-2\n\td{h}\cdot \n\eta\nonumber\\
&\leq&-C{\td{h}}^{-1}|\n \td{h}|^2\eta-2\td{h}|\n
r|^2-2\n\td{h}\cdot \n\eta.\label{o}
\end{eqnarray}
Observe that
\begin{eqnarray}
-2\n\td{h}\cdot \n\eta&=&(2C-2)\n\td{h}\cdot \n\eta-2C\n\td{h}\cdot \n\eta\nonumber\\
&=&(2C-2)\eta^{-1}\big(\n(\td{h}\eta)-\td{h}\n \eta\big)\cdot \n \eta-2C \n\td{h}\cdot \n\eta\nonumber\\
&\leq&(2C-2)\eta^{-1}\n \eta\cdot
\n(\td{h}\eta)-(2C-2)\td{h}\eta^{-1}|\n
\eta|^2\nonumber\\
&&\hskip1.8in+C\td{h}^{-1}|\n \td{h}|^2\eta
+C\td{h}\eta^{-1}|\n \eta|^2\nonumber\\
&=&(2C-2)\eta^{-1}\n \eta\cdot \n(\td{h}\eta)+C\td{h}^{-1}|\n
\td{h}|^2\eta+(8-4C)\td{h}|\n r|^2.\label{p}
\end{eqnarray} Here (\ref{n}) has been used.
Substituting (\ref{p}) into (\ref{o}) gives
\begin{equation}
\D(\td{h}\eta)\leq (2C-2)\eta^{-1}\n \eta\cdot
\n(\td{h}\eta)+(6-4C)\td{h}|\n r|^2
\end{equation}
on the support of $\varphi_+$, The weak parabolic maximal principle
then implies the result. \end{proof}

\begin{lem}\label{l2}

Assume $r$ satisfies $(\f{d}{dt}-\De)r\geq 0$. If $\g(M_t)\subset
\V$ for arbitrary $t\in [0,T]$ ($T>0$), $h:\V\ra \R$ is a smooth
positive function satisfying (\ref{m}) with $C\geq 1$, then for
arbitrary $a\geq 0$, the following estimate holds.
\begin{equation}\label{es2}
\sup_{M_t}\td{h}(1+r)^{-a}\leq \sup_{M_0}\td{h}(1+r)^{-a}.
\end{equation}

\end{lem}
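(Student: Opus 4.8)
The plan is to mimic the structure of the proof of Lemma~\ref{l1}, but now working on the unbounded weight $(1+r)^{-a}$ instead of the compactly supported cutoff $\varphi_+^2$. The key computation is again the parabolic inequality satisfied by the product $u\mathop{=}\limits^{def.}\td h\,\eta$, where now $\eta=(1+r)^{-a}$. Since $r$ is smooth we may differentiate freely: with $'$ denoting $d/dr$ we have $\eta'=-a(1+r)^{-a-1}\le 0$, $\eta''=a(a+1)(1+r)^{-a-2}\ge 0$, and the crucial algebraic identity $\eta^{-1}(\eta')^2=\frac{a}{a+1}\,\eta''\cdot\frac{a+1}{a}=\frac{a^2}{a(a+1)}\eta''$, i.e. $(\eta')^2=\frac{a}{a+1}\eta\,\eta''\le \eta\,\eta''$. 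So the self-interaction term of $\eta$ is controlled by $\eta\eta''$ rather than being a fixed constant; this is the one structural change from Lemma~\ref{l1}.

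First I would compute, exactly as in (\ref{o}), using (\ref{d1}) (valid here since (\ref{m}) holds with $C\ge1$) and $(\frac{d}{dt}-\De)r\ge0$, $\eta'\le0$:
\[
\D(\td h\,\eta)\ \le\ -C\td h^{-1}|\n\td h|^2\eta\ -\ \td h\,\eta''|\n r|^2\ -\ 2\n\td h\cdot\n\eta .
\]
Next, following (\ref{p}), I would split $-2\n\td h\cdot\n\eta=(2C-2)\n\td h\cdot\n\eta-2C\n\td h\cdot\n\eta$, rewrite $\n\td h=\eta^{-1}(\n u-\td h\,\n\eta)$ in the first group, and apply Cauchy–Schwarz to the second group in the form $-2C\n\td h\cdot\n\eta\le C\td h^{-1}|\n\td h|^2\eta+C\td h\,\eta^{-1}|\n\eta|^2$. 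This yields
\[
-2\n\td h\cdot\n\eta\ \le\ (2C-2)\eta^{-1}\n\eta\cdot\n u\ +\ C\td h^{-1}|\n\td h|^2\eta\ +\ (4-2C)\,\td h\,\eta^{-1}|\n\eta|^2 .
\]
Since $|\n\eta|^2=(\eta')^2|\n r|^2\le \eta\,\eta''|\n r|^2$ and (when $C\ge1$) the coefficient $4-2C$ may be positive, I estimate $\eta^{-1}|\n\eta|^2\le \eta''|\n r|^2$; when $C\ge2$ the term has a favorable sign and can simply be dropped. Substituting back, the $C\td h^{-1}|\n\td h|^2\eta$ terms cancel and the $\eta''|\n r|^2$ terms combine to a coefficient $\le(4-2C)-1=3-2C\le0$ (indeed for $C\ge1$ we get $(\le4-2C)-1\le 2-2C<1$, and in any case a nonpositive multiple of $\td h\,\eta''|\n r|^2$ remains after absorbing). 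The upshot is the clean inequality, on all of $M_t$,
\[
\D u\ \le\ (2C-2)\,\eta^{-1}\,\n\eta\cdot\n u ,
\]
with no zeroth-order term of bad sign.

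The remaining issue, and the part I expect to require the most care, is the application of the maximum principle on the (possibly) \emph{noncompact} $M_t$: unlike Lemma~\ref{l1} the cutoff is gone, so I cannot invoke the weak parabolic maximum principle on a compact region directly. The hypothesis that $\ol M_{t,R}$ is compact for every $R$ is precisely what lets me localize: I would multiply $u$ by a sequence of compactly supported spatial cutoffs built from $R^2-r$ (as in Lemma~\ref{l1}), run the weak maximum principle of Lemma~\ref{l1}-type on each $\ol M_{t,R}$, and let $R\to\infty$, checking that the boundary contributions vanish because $\eta=(1+r)^{-a}\to0$ there while $\td h$ is controlled by the confinability assumption $\g(M_t)\subset\V$ on the fixed open set where $h$ is defined. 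Once the localization is justified, the drift inequality $\D u\le (2C-2)\eta^{-1}\n\eta\cdot\n u$ gives $\sup_{M_t}u\le\sup_{M_0}u$, which is exactly (\ref{es2}). The main obstacle is thus not the algebra but making the noncompact maximum-principle step rigorous; everything else is a routine adaptation of the computation already carried out for Lemma~\ref{l1}.
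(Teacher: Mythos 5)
Your route is workable but takes a detour the paper avoids, and there is a nontrivial arithmetic slip in the coefficient tracking that would matter if left as written.

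The paper does not reuse the $(2C-2)/(-2C)$ gradient--decomposition trick of Lemma~\ref{l1} here. It computes $\D\big(\td h(1+r)^{-a}\big)$ directly, obtaining the two nonpositive terms $-C\td h^{-1}(1+r)^{-a}|\n\td h|^2$ and $-a(a+1)\td h(1+r)^{-a-2}|\n r|^2$ together with the cross term $2a(1+r)^{-a-1}\n\td h\cdot\n r$, and then absorbs the cross term completely into the two nonpositive terms by Young's inequality. The required coefficient condition is $Ca(a+1)\geq a^2$, i.e.\ $C\geq a/(a+1)$, which holds for every $a\geq 0$ since $C\geq1$. This yields $\D\big(\td h(1+r)^{-a}\big)\leq 0$ outright, with no drift term, and (\ref{es2}) then follows from the parabolic maximum principle for complete manifolds cited to \cite{E-H1}. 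Your variant instead leaves the drift $(2C-2)\eta^{-1}\n\eta\cdot\n(\td h\eta)$, which is an extra object whose boundedness must be checked before a maximum principle applies --- a step the paper's one-line Young absorption sidesteps.

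On the arithmetic: after the split $-2\n\td h\cdot\n\eta=(2C-2)\n\td h\cdot\n\eta-2C\n\td h\cdot\n\eta$ and Cauchy--Schwarz on the second piece, the surviving coefficient of $\td h\,\eta^{-1}|\n\eta|^2$ is $2-C$, not $4-2C$ as you write; compare (\ref{p}), where the paper's $8-4C$ is $(2-C)$ multiplied by the factor $4=\eta^{-1}(\eta')^2$ specific to $\eta=\varphi_+^2$. With $4-2C$ the final coefficient $(4-2C)\tfrac{a}{a+1}-1$ is positive for $a$ large whenever $1\leq C<\tfrac32$, and your chain ``$\leq 2-2C<1$'' neither follows (it should be $\leq 3-2C$) nor establishes the needed nonpositivity. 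With the correct coefficient one has $(2-C)\tfrac{a}{a+1}-1\leq(2-C)-1=1-C\leq 0$ for $C\geq1$ and all $a\geq0$, so the plan is salvageable, but the paper's direct Young absorption is simpler and avoids both the drift and this bookkeeping.
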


\begin{proof}

By $(\f{d}{dt}-\De)r\geq 0$,
\begin{equation}\aligned
(\f{d}{dt}-\De)(1+r)^{-a}&=-a(1+r)^{-a-1}(\f{d}{dt}-\De)r-a(a+1)(1+r)^{-a-2}|\n
r|^2\\
&\leq -a(a+1)(1+r)^{-a-2}|\n r|^2.
\endaligned
\end{equation}
In conjunction with (\ref{d1}), we have
\begin{equation}\aligned
&(\f{d}{dt}-\De)\big[\td{h}(1+r)^{-a}\big]\\
\leq&-C\td{h}^{-1}(1+r)^{-a}|\n
\td{h}|^2-a(a+1)\td{h}(1+r)^{-a-2}|\n r|^2-2\n{\td{h}}\cdot
\n(1+r)^{-a}\\
=&-C\td{h}^{-1}(1+r)^{-a}|\n \td{h}|^2-a(a+1)\td{h}(1+r)^{-a-2}|\n
r|^2+2a\n\td{h}\cdot (1+r)^{-a-1}\n r.
\endaligned
\end{equation}
$C\geq 1$ implies $Ca(a+1)\geq a^2$, then by Young's inequality,
$$(\f{d}{dt}-\De)\big[\td{h}(1+r)^{-a}\big]\leq 0.$$
Hence (\ref{es2}) follows from maximal principle for parabolic
equations on complete manifolds (see \cite{E-H1}).

\end{proof}

\begin{thm}\label{t1}
If the initial submanifold is an entire graph over $\ir{n}$,
i.e., $M_0=graph\ f_0$, where $f_0=(f_0^1,\cdots,f_0^m)$,
$f_0^\a=f_0^\a(x^1,\cdots,x^n)$; and
$$\De_{f_0}< 2,$$
where
$$\De_f(x)=\Big[\det\big(\de_{ij}+\f{\p f^\a}{\p x^i}(x)\f{\p f^\a}{\p x^j}(x)\Big]^{1/2}.$$
Then the submanifolds under the MCF are still entire graphs over the
same hyperplane, i.e., $M_t=graph\ f_t$; and
$$\De_{f_t}<2.$$
Moreover, if $(2-\De_{f_0})^{-1}$ has growth
\begin{equation*}
(2-\De_{f_0})^{-1}(x)\leq C_0 (|x|^2+1)^a
\end{equation*}
where $C_0, a$ are both positive constants, then the growth of
$(2-\De_{f_t})^{-1}$ can be controlled by
$$(2-\De_{f_t})^{-1}\leq 2C_0(|x|^2+2nt+1)^a.$$
\end{thm}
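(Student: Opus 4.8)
The plan is to run the theorem through the two confinability lemmas already established, after translating the graphical condition $\Delta_{f_0}<2$ into a statement about the Gauss image lying in the convex set $\V_2\subset B_{JX}(P_0)$. First I would observe that for a graph $M_0=\mathrm{graph}\,f_0$ over $\ir{n}$, the tangent $n$-plane at a point is spanned by the vectors $\partial_i + \tfrac{\partial f_0^\a}{\partial x^i}\ep_{n+\a}$, so in the coordinates on $\U$ introduced in Section 2 we have $Z=(\partial f_0^\a/\partial x^i)$ and hence $v\circ\g = [\det(I_n+ZZ^T)]^{1/2} = \Delta_{f_0}$. Thus $\Delta_{f_0}<2$ is exactly $\g(M_0)\subset \V_2$, and by Remark 2.1 this is a convex subset of $\grs{n}{m}$ on which Theorem 2.1 provides the lower Hessian bound for $v$.

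Next I would verify that $h=v$ satisfies the hypothesis $\Hess(h)\ge Ch^{-1}dh\otimes dh$ with a constant $C$ in the required range. From Theorem 2.1, on $\ol{\V}_2$ we have $\Hess(v)\ge v(2-v)g + \big(\tfrac{v-1}{pv(v^{2/p}-1)}+\tfrac{p+1}{pv}\big)dv\otimes dv$; on $\V_2$ the first term $v(2-v)g$ is nonnegative, and the scalar coefficient of $dv\otimes dv$ should be checked to be at least $3/(2v)$ (for Lemma 4.1) and at least $1/v$ (for Lemma 4.2) for all $v\in(0,2)$ — the coefficient is manifestly $\ge (p+1)/(pv)\ge \tfrac{3}{2v}$ when $p=1$, and for $p\ge 2$ one uses the positivity of the extra term $\tfrac{v-1}{pv(v^{2/p}-1)}$ together with the same bound; this is the routine calculus point I would not grind through here. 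With this in hand, Proposition 3.1 and equation (3.4) give the evolution inequality $(\tfrac{d}{dt}-\Delta)\td v \le -C\td v^{-1}|\n\td v|^2$.

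For the preservation statement I would take $r(x,t)=|F(x,t)|^2$, which satisfies $(\tfrac{d}{dt}-\Delta)r = -2n\le$... — more precisely from equation (3.2) we have $(\tfrac{d}{dt}-\Delta)|F|^2=-2n<0$, so this particular $r$ does \emph{not} satisfy $(\tfrac d{dt}-\Delta)r\ge 0$; instead I would use $r(x,t)=|F(x,t)|^2+2nt$, which does satisfy the required inequality with equality. Applying Lemma 4.2 with this $r$, $h=v$, $C\ge 1$, and the given exponent $a$ yields $\sup_{M_t}\td v\,(1+r)^{-a}\le \sup_{M_0}\td v\,(1+r)^{-a}$. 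The hypothesis $(2-\Delta_{f_0})^{-1}(x)\le C_0(|x|^2+1)^a$ controls the initial quantity; one then has to pass from a bound on $(2-v)^{-1}=(2-\Delta_f)^{-1}$ to a bound on $v=\Delta_f$ and back, using that $v<2$ on the image and hence $\Delta_{f_t}<2$ persists (which in turn re-confirms $M_t$ stays a graph, via the inverse function theorem / the fact that $v<\infty$ forces the plane to project isomorphically onto $\ir{n}$). Tracking constants: at $t$ one gets $\td v\le \sup_{M_0}\td v\,(1+r_0)^{-a}\cdot(1+r_t)^a$ with $r_t=|F|^2+2nt\le |x|^2+2nt$ after controlling how the spatial coordinate moves, and combining with $2-v\ge c\,v^{-1}(\cdot)$ type inequalities one extracts $(2-\Delta_{f_t})^{-1}\le 2C_0(|x|^2+2nt+1)^a$; the factor $2$ absorbs the discrepancy between $2-v$ and $1/v$ on $(0,2)$ together with $(1+|x|^2+2nt)/(1+|x|^2)\le$ adjustments.

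The main obstacle I expect is \emph{not} the maximum-principle machinery — that is entirely supplied by Lemmas 4.1 and 4.2 — but rather the bookkeeping that converts the scalar estimate on $\td v=\Delta_{f_t}$ into the precise growth bound on $(2-\Delta_{f_t})^{-1}$ with the stated constant $2C_0$ and shifted argument $|x|^2+2nt+1$. This requires (i) checking the Hessian coefficient inequality for $v$ uniformly on $(0,2)$ for all $p=\min(n,m)$, (ii) correctly identifying which $r$ makes $(\tfrac d{dt}-\Delta)r\ge0$ and relating $r$ back to $|x|^2$, and (iii) an elementary but careful comparison between the functions $v\mapsto v$ and $v\mapsto (2-v)^{-1}$ on $(0,2)$ to see that a uniform bound on one yields the claimed bound on the other with the factor $2$. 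The graph-preservation itself is then automatic: $\Delta_{f_t}<2<\infty$ everywhere means $\g(M_t)\subset\U$, so $M_t$ never develops a vertical tangent plane and remains an entire graph over the same $\ir{n}$ for as long as the flow exists.
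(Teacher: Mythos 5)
Your identification $v\circ\g=\Delta_{f_t}$ and the choice $r=|F|^2+2nt$ for the parabolic weight are both correct, but the central choice $h=v$ does not work, and this is not a bookkeeping issue that a factor of $2$ can absorb.

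First, for the preservation statement you need a function $h$ that \emph{blows up} as $v\to 2^-$. The point of Lemma~\ref{l1} is that the uniform bound $\td h\le\varphi_+^{-2}\sup_{\ol M_{0,R}}\td h\varphi_+^2$ at an interior point, together with finiteness of the right-hand side, forces $\td h<\infty$ there and hence $v<2$. With $h=v$ the inequality $\td v\le\text{const}$ gives you no information about whether $\td v<2$ — the constant will typically exceed $2$. The paper instead takes $h=v^{3/2}(2-v)^{-3/2}$, which is finite precisely on $\{v<2\}$, and for which the Hessian lower bound $\Hess(h)\ge 3hg+\tfrac32 h^{-1}dh\otimes dh$ holds on $\V_2$ (cited from \cite{X-Y}). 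Finiteness of $\td h$ at an interior point then immediately gives $v<2$ there. Moreover, you apply Lemma~\ref{l2} to prove preservation, but Lemma~\ref{l2} already presupposes $\g(M_t)\subset\V$ for all $t$ — that is exactly what you are trying to establish. Lemma~\ref{l1} is the tool for confinement; Lemma~\ref{l2} only controls growth after confinement is known.

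Second, for the growth bound your proposed ``elementary comparison between $v\mapsto v$ and $v\mapsto(2-v)^{-1}$'' does not exist: an upper bound on $v$ (which is anyway below $2$) says nothing about $(2-v)^{-1}$, which is unbounded as $v\to 2$. The estimate from Lemma~\ref{l2} must be applied to a function of the form $h\sim(2-v)^{-q}$ so that the bound on $\td h$ translates back into a bound on $(2-\Delta_{f_t})^{-1}$; the paper's $h=v^{3/2}(2-v)^{-3/2}$ does exactly this. A further, smaller discrepancy: for the growth estimate the paper takes $r=|\mathbf x|^2+2nt$ (with $\mathbf x\in\ir n$ the base coordinate), not $r=|F|^2+2nt$. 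The theorem's conclusion is stated in $|x|^2+2nt+1$, and $|F|^2\ge|\mathbf x|^2$ points the wrong way, so your choice of $r$ would only give the weaker bound $\td h\lesssim(1+|F|^2+2nt)^a$ rather than the stated one.
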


\begin{proof}
Define $h=v^{\f{3}{2}}(2-v)^{-\f{3}{2}}$, then on $\{P:v(P)<2\}$,
we have (see \cite{X-Y}, inequality (4.6))
\begin{eqnarray}\label{k}
\Hess(h)&=&h'\Hess(v)+h''dv\otimes dv\nonumber\\
&\geq&3 h g+\f{3}{2}h^{-1}dh\otimes dh.
\end{eqnarray}

Define $r(x,t)=|F|^2+2nt$, then $\D r=0$ . Hence, the estimate in
Lemma \ref{l1} holds. For arbitrary $x_0\in M_{t_0}$, choose
$R>0$, such that $r(x_0,t_0)<R^2$, then $\varphi_+(x_0,t_0)>0$
and Lemma \ref{l1} implies
\begin{equation}
\td{h}(x_0,t_0)\leq
\f{1}{\varphi_+(x_0,t_0)}\sup_{\ol{M}_{0,R}}\td{h}\varphi_+^2<+\infty.
\end{equation}
Noting that $\td{h}\ra +\infty$ when $v\ra 2_-$ we have
$v(x_0,t_0)<2$ and the first result follows.   For 
$\mathbf{x}\in\ir{n}$, it is not difficult to see that 
$$(\f{d}{dt}-\De)(|\mathbf{x}|^2+2nt)\geq 0.$$ 
Now, we define $r=|\mathbf{x}|^2+2nt$, then the second assertion  
easily follows from Lemma \ref{l2}.
\end{proof}

Choose $$h=\sec^2(\sqrt{2}\rho)$$ and by the similar argument we 
can improve the previous result of the first author  \cite{X3} as 
follows
\begin{thm}\label{t4}
If the Gauss image of the initial complete submanifold $M_0$ is
contained in an open geodesic ball of the radius
$R_0\le\frac{\sqrt{2}}{4}\pi$ in $\grs{m}{n}$, then the Gauss images
of all the submanifolds under the MCF are also contained in the same
geodesic ball. Moreover, if
$$(\f{\sqrt{2}}{4}\pi-\rho)^{-1}\leq C_0\big(|F|^2+1\big)^a\qquad \mbox{on }M_0,$$
(Here $\rho$ denotes the distance function on $\grs{n}{m}$ from the
center of the geodesic ball, $C_0, a$ are both positive constants.)
then
$$(\f{\sqrt{2}}{4}\pi-\rho)^{-1}\leq 2C_0(|F|^2+2nt+1)^a$$
\label{cp} for arbitrary $a\ge 0$.
\end{thm}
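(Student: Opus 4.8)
The plan is to re-run the proof of Theorem~\ref{t1} almost verbatim, changing only the auxiliary function: in place of $h=v^{\f{3}{2}}(2-v)^{-\f{3}{2}}$ I would take
$$h=\sec^2(\sqrt{2}\,\rho),$$
where $\rho$ is the distance on $\grs{n}{m}$ from the center $P_0$ of the given geodesic ball, and I keep $r(x,t)=|F|^2+2nt$, for which $\D r=0$. This $h$ is smooth and positive on $\V:=\{P:\rho(P)<\f{\sqrt{2}}{4}\pi\}$, so the one genuinely new point is to check that it satisfies the differential inequality~\eqref{m} used by Lemmas~\ref{l1} and~\ref{l2}. Writing $\Hess(h)=h'\Hess(\rho)+h''\,d\rho\otimes d\rho$, substituting the Hessian comparison of Remark~2.2 (which on $\V$ holds with $\cot(\sqrt{2}\rho)>0$), and using $h'\ge 0$ there, a short computation should give the exact analogue of~\eqref{k},
$$\Hess(h)\ \ge\ 3h\,g+\tfrac{3}{2}\,h^{-1}\,dh\otimes dh\qquad\text{on }\V,$$
so in particular~\eqref{m} holds with $C=\tfrac{3}{2}$, which suffices both for Lemma~\ref{l1} (where $C\ge\tfrac{3}{2}$ is required) and for Lemma~\ref{l2} (where $C\ge 1$ is required).

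Granting this, the confinement statement follows exactly as in Theorem~\ref{t1}. Since $\D r=0$, Lemma~\ref{l1} applies with this $\V$: for $x_0\in M_{t_0}$ choose $R$ with $r(x_0,t_0)<R^2$; as $\g(\ol{M}_{0,R})\subset\g(M_0)\subset\V$, Lemma~\ref{l1} gives $\td{h}(x_0,t_0)\le\varphi_+(x_0,t_0)^{-2}\sup_{\ol{M}_{0,R}}\td{h}\,\varphi_+^2<+\infty$; and since $\td{h}=\sec^2(\sqrt{2}\rho)\to+\infty$ as $\rho\to\f{\sqrt{2}}{4}\pi$, this forces $\g(M_t)\subset\V$ for every $t$. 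Knowing $\g(M_t)\subset\V$ on $[0,T]$, Lemma~\ref{l2} with $a=0$ then yields $\sup_{M_t}\td{h}\le\sup_{M_0}\td{h}$, i.e. $\sup_{M_t}\rho\le\sup_{M_0}\rho\le R_0$, so the Gauss image stays inside the original ball.

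For the growth estimate I would compare $\td{h}$ with $(\f{\sqrt{2}}{4}\pi-\rho)^{-1}$. Putting $\de=\f{\sqrt{2}}{4}\pi-\rho$ one has $\cos(\sqrt{2}\rho)=\sin(\sqrt{2}\de)$ with $\sqrt{2}\de\in[0,\f{\pi}{2}]$, so the elementary bounds $\f{2}{\pi}x\le\sin x\le x$ on $[0,\f{\pi}{2}]$ give
$$\tfrac{1}{\sqrt{2}}\,\de^{-1}\ \le\ \sec(\sqrt{2}\rho)\ \le\ \f{\pi}{2\sqrt{2}}\,\de^{-1}.$$
Next I apply Lemma~\ref{l2} with the same $h$, with exponent $2a$ in place of $a$, and with $r=|F|^2+2nt$ (again $\D r=0\ge 0$, and $\g(M_t)\subset\V$ from the previous step): this gives $\sup_{M_t}\td{h}\,(1+|F|^2+2nt)^{-2a}\le\sup_{M_0}\td{h}\,(1+|F|^2)^{-2a}$. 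By the right-hand comparison (squared) and the hypothesis $(\f{\sqrt{2}}{4}\pi-\rho)^{-1}\le C_0(|F|^2+1)^a$ on $M_0$, the right side is at most $\f{\pi^2}{8}C_0^2$, hence $\td{h}\le\f{\pi^2}{8}C_0^2(1+|F|^2+2nt)^{2a}$ on $M_t$; feeding this into the left-hand comparison, $(\f{\sqrt{2}}{4}\pi-\rho)^{-1}\le\sqrt{2}\,\td{h}^{1/2}\le\f{\pi}{2}\,C_0(|F|^2+2nt+1)^a<2C_0(|F|^2+2nt+1)^a$, which is the claim.

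The main obstacle I anticipate is concentrated in the first paragraph: carrying out the curvature computation so as to obtain the inequality $\Hess(\sec^2(\sqrt{2}\rho))\ge \tfrac{3}{2}h^{-1}dh\otimes dh$ from Remark~2.2, and — equally important — checking that the numerical constant that comes out at the end of the growth argument, namely $\pi/2$, stays strictly below the $2$ in the statement. Everything after that is a faithful transcription of the proof of Theorem~\ref{t1}.
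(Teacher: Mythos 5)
Your proposal is correct and follows exactly the route the paper intends: the paper's own ``proof'' of this theorem is the single sentence ``Choose $h=\sec^2(\sqrt{2}\rho)$ and by the similar argument,'' so your transcription of the Theorem~\ref{t1} argument with this $h$ is precisely what is meant. The deferred computation works out: with $h'=2\sqrt{2}\sec^2(\sqrt{2}\rho)\tan(\sqrt{2}\rho)$ and $h''=8\sec^2\tan^2+4\sec^4$, Remark~2.2 gives $\Hess(h)\ge h'\sqrt{2}\cot(\sqrt{2}\rho)(g-d\rho\otimes d\rho)+h''\,d\rho\otimes d\rho=4hg+12\sec^2\tan^2\,d\rho\otimes d\rho=4hg+\tfrac{3}{2}h^{-1}dh\otimes dh$, so~\eqref{m} holds with $C=\tfrac32$, and the final constant $\pi/2$ is indeed below $2$.
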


Let $M \to \ir{4}$ be a  surface. Let $\pi_1 :\grs{2}{2} \to S^2$
be the projection of $\grs{2}{2}$ into its first factor, and
$\pi_2$ be the projection into the second factor. Define
$\gamma_i=\pi_i\circ\gamma.$ We also have

\begin{thm}
If the partial Gauss image of an initial surface $M$ in $\ir{4}$
is contained in a  hemisphere, then the partial Gauss image of
all the surfaces under MCF are  same hemisphere.
\end{thm}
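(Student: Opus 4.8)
The plan is to exploit the product structure $\grs{2}{2}=S^2(1/\sqrt 2)\times S^2(1/\sqrt 2)$, under which the Gauss map splits as $\g=(\g_1,\g_2)$ with $\g_i=\pi_i\circ\g$. Fix the hemisphere containing $\g_1(M)$; say it is the set where a fixed linear functional $\lan\cdot,a\ran$ on $\R^3$ is positive, with $a\in S^2$ the pole. On the first $S^2$-factor, consider the ``height'' function $u(y)=\lan y,a\ran$ for $y\in S^2(1/\sqrt2)$. The idea is to pull back a suitable convex-type function of $u$ via $\g_1$ and run the parabolic maximum principle exactly as in Lemma~\ref{l1}, using the composition formula (\ref{j}).

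First I would record the geometry of $u$ on the round sphere $S^2(1/\sqrt2)$: since the factor has constant sectional curvature $2$, the Hessian of $u$ satisfies $\Hess(u)=-2u\,g_{S^2}$, i.e.\ $u$ is an eigenfunction. Consequently the function $h_1:=u^{-1}$, defined on the open hemisphere $\{u>0\}$, satisfies
\begin{equation*}
\Hess(h_1)=-u^{-2}\Hess(u)+2u^{-3}du\otimes du=2u^{-1}g_{S^2}+2h_1^{-1}dh_1\otimes dh_1\ge 2h_1^{-1}dh_1\otimes dh_1 .
\end{equation*}
Now regard $h_1$ as a function on $\grs{2}{2}$ via the first projection $\pi_1$; call it $h$ again. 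Because $\pi_1$ is a Riemannian submersion onto $S^2(1/\sqrt2)$ (up to the constant factor) and $\pi_2$ contributes nothing, $\Hess_{\grs{2}{2}}(h)$ agrees with $\pi_1^*\Hess_{S^2}(h_1)$ on horizontal directions and vanishes on the fibre directions, so one still gets $\Hess(h)\ge 2h^{-1}dh\otimes dh$ on $\pi_1^{-1}(\{u>0\})$; in particular $h$ satisfies hypothesis (\ref{m}) with $C=2\ge 1$. This is the step I expect to require the most care: one must check that passing from the $S^2$-factor to the product does not destroy the Hessian inequality — that the fibre (second-factor) directions and the mixed terms only help, which is exactly the content of the metric being a Riemannian product.

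With $h$ in hand, the rest is a direct transcription of the earlier arguments. Taking $r(x,t)=|F|^2+2nt$ one has $\big(\f{d}{dt}-\De\big)r=0\ge0$, so Lemma~\ref{l1} applies on each $\ol M_{0,R}$ and, since $h\circ\g_1\to+\infty$ as $\g_1$ approaches the boundary of the hemisphere, the same argument as in the proof of Theorem~\ref{t1} forces $\g_1(M_t)$ to stay inside the hemisphere for all $t$; more precisely $\tilde h\,\varphi_+^2$ is bounded by its initial supremum, hence finite, hence $\g_1(x_0,t_0)$ is an interior point. Since $\pi_1$ was arbitrary among the two factors — the same works verbatim with $\pi_2$ — the theorem follows. (If a growth-rate refinement analogous to the second halves of Theorems~\ref{t1} and \ref{t4} were wanted, one would instead invoke Lemma~\ref{l2} with $r=|\mathbf x|^2+2nt$ and $h=u^{-1}$, using $C=2\ge1$; but for the confinement statement as stated, Lemma~\ref{l1} suffices.) The only genuine obstacle is the Hessian-comparison bookkeeping in the product manifold described above; everything else is the maximum-principle machinery already established in Section~4.
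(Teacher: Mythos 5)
Your proposal is correct, and it supplies what the paper leaves implicit: the paper states this theorem without proof, evidently expecting the reader to run the machinery of Lemma~\ref{l1} (as in Theorems~\ref{t1} and~\ref{t4}) on a barrier function that blows up at the boundary of the hemisphere in one $S^2(1/\sqrt{2})$-factor of $\grs{2}{2}$. Your choice $h=u^{-1}$ (equivalently $\sqrt{2}\sec(\sqrt{2}\rho_1)$ in terms of intrinsic distance $\rho_1$ from the pole) yields $\Hess(h)\ge 2h^{-1}dh\otimes dh$, so $C=2\ge\f{3}{2}$ and Lemma~\ref{l1} applies; the most natural paper-style choice, $h=\sec^2(\sqrt{2}\rho_1)$ in the spirit of Theorem~\ref{t4}, gives $C=\f{3}{2}$ exactly, and either works. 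Two small points worth recording: (i) the Hessian computation $\Hess(u)=-2u\,g_{S^2}$ is the standard eigenfunction identity on a sphere of radius $1/\sqrt{2}$, and the value $C=2$ is in fact independent of the radius, so no precision about the normalization of the splitting is needed for the $dh\otimes dh$ coefficient; (ii) the lift of the Hessian bound from the factor to $\grs{2}{2}$ is justified because $\pi_1$ is the projection of a Riemannian \emph{product}, not merely a submersion — for $h=h_1\circ\pi_1$ one has $\Hess_{\grs{2}{2}}(h)=\pi_1^*\Hess_{S^2}(h_1)$ and $dh=\pi_1^*dh_1$ exactly, so both sides of (\ref{m}) depend only on the first-factor components of tangent vectors and the inequality transfers verbatim. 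With those observations in place your argument is a faithful adaptation of the proof of Theorem~\ref{t1}, and the symmetric role of $\pi_2$ closes the statement.
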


\Section{Curvature estimates}{Curvature estimates}
\medskip

Let $h:\V\to\ir{}$ be a smooth function defined on an open subset
$\V\subset \grs{n}{m}$, and $h\geq 1$. Suppose that $\Hess(h)$ is nonnegative
definite on $\V$ and have the estimate
\begin{equation}\label{k}
\Hess(h)\geq 3 h g+ \f{3}{2} h^{-1}dh\otimes dh,
\end{equation}
where $g$ is the metric tensor on $\grs{n}{m}$.  $r$ is a smooth, 
non-negative function on $\R^{n+m}\times \R$ satisfying
\begin{equation}\label{co3}
\Big|\D r\Big|\leq C(n)\qquad \mbox{and}\qquad|\n r|^2\leq C(n)r.
\end{equation}

\begin{thm}\label{t2}

Let $R>0,T>0$ be such that for any $x\in \ol{M}_{t,R}$, where
$t\in [0,T]$, we have $\g(x)\in \V$. Then for any $t\in [0,T]$
and $\th\in [0,1)$, we have the estimate
\begin{equation*}\label{cu0}
\sup_{x\in \ol{M}_{t,\th R}}|B|^2\leq
C(n)(1-\th^2)^{-2}(t^{-1}+R^{-2})\sup_{x\in \ol{M}_{s,R},s\in
[0,t]}\td{h}^2,
\end{equation*}
where $\td h=h\circ\g$.
\end{thm}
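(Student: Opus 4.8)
The plan is to run a localized parabolic maximum-principle argument in the spirit of Ecker--Huisken's interior curvature estimate, using $\td h$ as the confining weight and a spatial cutoff built from $r$. First I would introduce the test function
\begin{equation*}
f = \frac{|B|^2\,\varphi}{\bigl(\sup \td h^2 - \td h^2\bigr)},
\end{equation*}
or, more comfortably, work with $g = |B|^2\,\td h^2\,\varphi$ where $\varphi$ is a time-dependent cutoff of the form $\varphi = (\th^2 R^2 - r + \text{(something in }t\text{)})_+^k$, tuned so that $(\frac{d}{dt}-\Delta)\varphi$ has a favorable sign away from the extra terms produced by $|\nabla r|^2 \le C(n) r$ and $|\Delta r| \le C(n)$. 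The two evolution inequalities I would combine are (\ref{B}), $(\frac{d}{dt}-\Delta)|B|^2 \le -2|\nabla|B||^2 + 3|B|^4$, and the consequence of (\ref{j}) together with (\ref{k}), namely
\begin{equation*}
\Bigl(\frac{d}{dt}-\Delta\Bigr)\td h \le -3\,\td h\,|\g_* e_i|^2 - \tfrac32\,\td h^{-1}|\nabla \td h|^2,
\end{equation*}
and crucially $|\g_* e_i|^2 = |B|^2$ is the energy density of the Gauss map, so the first term on the right is $-3\,\td h\,|B|^2$. This is exactly the mechanism that lets $\td h$ (or a power of it) dominate the bad $+3|B|^4$ term: computing $(\frac{d}{dt}-\Delta)(|B|^2\td h)$ and using Young's inequality on the cross term $\nabla|B|^2\cdot\nabla\td h$, the quartic terms cancel or become negative, leaving something one can feed into the maximum principle.

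Next I would carry out the cutoff bookkeeping. Set $\eta$ to be an appropriate power of $\varphi$ so that, as in Lemma \ref{l1}, $\eta^{-1}(\eta')^2$ is bounded and $\eta''$ is controlled; then expand $(\frac{d}{dt}-\Delta)(|B|^2\td h\,\eta)$ using the product rule, substitute the two evolution inequalities, and absorb all gradient cross-terms by repeated application of the Cauchy--Schwarz/Young inequality, exactly mimicking the algebra in the proof of Lemma \ref{l1} (the identity $-2\nabla u\cdot\nabla\eta = (2C-2)\eta^{-1}\nabla\eta\cdot\nabla(u\eta) - \dots$). The role of the hypotheses (\ref{co3}) is to ensure that the terms coming from $\Delta r$ and $|\nabla r|^2$ in $(\frac{d}{dt}-\Delta)\eta$ are bounded by $C(n)$ times lower-order quantities, hence can be dominated by choosing the power $k$ in $\varphi$ large enough and paying a factor $(1-\th^2)^{-2}$. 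The time factor $t^{-1}$ enters in the standard way: if one instead uses $\tilde f = t\,|B|^2\td h\,\eta$, the extra term $|B|^2\td h\,\eta$ from $\frac{d}{dt}(t\cdot)$ must be absorbed, and at an interior spacetime maximum one obtains a bound of the form $t\,|B|^2 \le C(n)(1-\th^2)^{-2}R^{-2}\sup\td h^2$ at that point; splitting into $t \ge R^2$ and $t < R^2$, or running the argument on $[t/2,t]$, converts this to the stated $(t^{-1}+R^{-2})$.

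The main obstacle I expect is the sign/absorption analysis of the combined quartic and gradient terms: one must verify that the precise constants in (\ref{k}) (the $3$ in front of $h\,g$ and the $\tfrac32$ in front of $h^{-1}dh\otimes dh$) are exactly what is needed so that $3|B|^4$ from (\ref{B}) is beaten by $-3\td h|B|^2$ (after multiplying through by $\td h$ and using $\td h \ge 1$), while simultaneously the $|\nabla|B||^2$ term, the $\td h^{-1}|\nabla\td h|^2$ term, and the cutoff gradient terms are all kept non-positive after Young's inequality with carefully chosen weights. A secondary technical point is justifying the maximum principle on the possibly noncompact $M_t$: this is handled because $\ol M_{t,R}$ is compact by hypothesis and $\varphi$ is supported there, so the maximum of the test function is attained, and at that point the usual first- and second-derivative tests apply. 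Once the differential inequality $(\frac{d}{dt}-\Delta)(\text{test function}) \le (\text{gradient term})\cdot\nabla(\text{test function})$ is established on the support of $\varphi$, the conclusion is immediate from the weak parabolic maximum principle together with the fact that the test function vanishes at $t=0$ (for the $t^{-1}$ part) or is controlled by its initial/boundary values.
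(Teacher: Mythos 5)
Your high-level plan is right — this is indeed an Ecker--Huisken interior curvature estimate with $\td h$ as the confining weight, $\eta=(R^2-r)_+^2$ as the cutoff, and $t$-multiplication to produce the $t^{-1}$ decay — and your identification of the key analytic input, $\big(\f{d}{dt}-\De\big)\td h\le -3\td h|B|^2-\f{3}{2}\td h^{-1}|\n\td h|^2$ via $|\g_*e_i|^2=|B|^2$, matches the paper. But the specific multiplier you propose does not work, and the choice of multiplier is the whole content of the proof.

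The paper sets $\varphi(\td h)=\f{\td h}{1-k\td h}$ with $k$ ultimately taken to be $\f12\inf\td h^{-1}$. With $g=|B|^2\varphi$, the combined computation produces (their (5.6))
\begin{equation*}
\Big(\f{d}{dt}-\De\Big)g\ \le\ -\big(3\varphi'\td h-3\varphi\big)|B|^4-\Big(\f32\varphi'\td h^{-1}+\varphi''-\f32\varphi^{-1}(\varphi')^2\Big)|B|^2|\n\td h|^2-\varphi^{-1}\n\varphi\cdot\n g,
\end{equation*}
and one needs \emph{both} $3\varphi'\td h-3\varphi\ge 0$ (this supplies the negative quadratic $-3kg^2$ that generates the $t^{-1}$ factor at the spacetime maximum of $tg\eta$) \emph{and} $\f32\varphi'\td h^{-1}+\varphi''-\f32\varphi^{-1}(\varphi')^2\ge 0$ (this absorbs the $|\n\td h|^2$ gradient terms, including those coming from the cutoff). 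Your candidate $\varphi=\td h^2$ fails the second condition: a direct computation gives $\f32\varphi'\td h^{-1}+\varphi''-\f32\varphi^{-1}(\varphi')^2=3+2-6=-1<0$, so the $|\n\td h|^2$ term comes out with the wrong sign and cannot be disposed of. Your other candidate $\varphi=\td h$ makes both coefficients exactly zero — the paper notes $(\f{d}{dt}-\De)(|B|^2\td h)\le-\td h^{-1}\n\td h\cdot\n(|B|^2\td h)$, which is fine for the global Corollary at the end of Section 5 — but then there is no $-g^2$ term at all, and at the maximum of $t|B|^2\td h\,\eta$ the extra $|B|^2\td h\,\eta$ coming from $\f{d}{dt}(t\,\cdot)$ has nothing to absorb it; you cannot conclude an a priori bound. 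The ``$t\ge R^2$ versus $t<R^2$'' split you invoke converts a $t^{-1}$ bound into a $(t^{-1}+R^{-2})$ bound but does not manufacture the $t^{-1}$ bound in the first place. Your first option $f=|B|^2\varphi/(\sup\td h^2-\td h^2)$ is closer in spirit to the rational structure that is actually needed, but you never compute with it, and the verification that the two sign conditions hold is exactly where the proof lives. In short: the skeleton is correct, but the claim that ``the quartic terms cancel or become negative'' is asserted rather than checked, is false for $\td h^2$, and is true-but-useless for $\td h$; the Ecker--Huisken rational multiplier $\varphi=\td h/(1-k\td h)$ with the specific choice of $k$ is the missing ingredient.
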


The proof of Theorem \ref{t2} shall be given later. At first we will see several
applications of it.

Let $r=|\mathbf{x}|^2$ for $\mathbf{x}\in \ir{n}$, then
$$\Big|\D r\Big|=\Big|2x^i\D x^i-2|\n x^i|^2\Big|\leq 2n$$
and
$$|\n r|^2=|2x^i \n x^i|^2=4(x^i)^2|\n x^i|^2\leq 4r.$$
Hence Theorem \ref{t2} yields

\begin{cor}\label{co1}

Let $R>0, T>0$ be such that for any $t\in [0,T]$, $M_t\cap
\big((B_R\subset \R^n)\times \R^m\big)$ is a graph over $B_R$, i.e.
$M_t\cap \big((B_R\subset \R^n)\times \R^m\big)=\{(x,f_t(x)):x\in
B_R\}$, and $\De_{f_t}<2$, then the following estimate holds for
arbitrary $t\in [0,T]$ and $\th\in [0,1)$
$$\sup_{(x,f_t(x))\in K(t,\th R)}|B|^2\leq
C(n)(1-\th^2)^{-2}(t^{-1}+R^{-2})\sup_{s\in [0,t]}\sup_{(x,f_s(x))\in K(s,R)}(2-\De_{f_s})^{-3}.$$
Here
$$K(s,R)=\{(x,f_s(x)):x\in B_R\}.$$
\end{cor}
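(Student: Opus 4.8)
The plan is to derive Corollary \ref{co1} from Theorem \ref{t2} purely by unwinding the hypotheses into the geometric language of graphs. The key identification is that when a portion of $M_t$ over the ball $B_R\subset\R^n$ is written as a graph $\{(x,f_t(x)):x\in B_R\}$ with $\De_{f_t}<2$, the Gauss image of that portion lands in the sublevel set $\V_2=\{P\in\grs{n}{m}:v(P)<2\}$, because $v\circ\g=\De_{f_t}$ along the graph (the function $v=w^{-1}=[\det(I_n+ZZ^T)]^{1/2}$ is exactly the graph-area density $\De_f$ under the standard local coordinates $Z=Df$). So I would take $\V=\V_2$ and note that Theorem \ref{t1} guarantees $\De_{f_t}<2$ is preserved, making the hypothesis $\g(x)\in\V$ automatic on the relevant region.

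Next I would supply the auxiliary function $h$ required by Theorem \ref{t2}. The natural choice, already used in the proof of Theorem \ref{t1}, is $h=v^{3/2}(2-v)^{-3/2}$ on $\V_2$. By inequality (\ref{k}) (the displayed Hessian estimate $\Hess(h)\geq 3hg+\tfrac32 h^{-1}dh\otimes dh$, which follows from Theorem 2.1 exactly as in \cite{X-Y}), this $h$ satisfies the hypotheses of Theorem \ref{t2}; moreover $h\geq 1$ on $\V_2$ since $v\geq 1$ always and $v<2$ there forces $2-v<1<v$... more carefully, $h\ge1$ because $v\ge 2-v$ on $\{v\ge1\}$, and $v\ge 1$ identically. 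With this choice $\td h=h\circ\g=\De_{f_t}^{3/2}(2-\De_{f_t})^{-3/2}$, so $\td h^2=\De_{f_t}^3(2-\De_{f_t})^{-3}\le 8(2-\De_{f_t})^{-3}$ on the graph since $\De_{f_t}<2$. The factor $8$ gets absorbed into the dimensional constant $C(n)$.

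Then I would verify condition (\ref{co3}) for the particular choice $r=|\mathbf{x}|^2$, which is precisely the two short computations already displayed in the excerpt just before the corollary: $|\D r|\le 2n$ and $|\n r|^2\le 4r$, so $C(n)$ can be taken as $\max(2n,4)$. With $\V=\V_2$, $h$ as above, and this $r$, Theorem \ref{t2} applies verbatim and gives
$$\sup_{x\in\ol M_{t,\th R}}|B|^2\le C(n)(1-\th^2)^{-2}(t^{-1}+R^{-2})\sup_{s\in[0,t]}\sup_{x\in\ol M_{s,R}}\td h^2.$$
Finally I would translate the level sets $\ol M_{t,\rho}=\{x\in M_t:|\mathbf x|^2\le\rho^2\}$ into the graph notation $K(t,\rho)=\{(x,f_t(x)):x\in B_\rho\}$ and replace $\td h^2$ by $(2-\De_{f_s})^{-3}$ (up to the constant $8$), yielding the stated inequality.

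I do not expect a serious obstacle here: the whole argument is a dictionary translation plus the two already-performed estimates on $r$. The only point requiring a little care — and the thing I would state explicitly rather than gloss over — is the identity $v\circ\g=\De_f$ on a graph, i.e.\ that the local coordinate matrix $Z$ of the tangent plane equals the Jacobian $Df$, so that $v(P)=[\det(I_n+Df\,Df^T)]^{1/2}=\De_f$; this is what makes ``$\De_{f_t}<2$'' and ``$\g(x)\in\V_2$'' the same condition and what lets Theorem \ref{t1} supply the confinement hypothesis of Theorem \ref{t2}. Everything else is bookkeeping of constants, all of which are allowed to depend only on $n$.
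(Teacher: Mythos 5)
Your proposal is correct and follows the same route the paper takes: identify $r=|\mathbf{x}|^2$ as satisfying \eqref{co3} (exactly the two lines the paper displays just before the corollary), take $\V=\V_2$ with $h=v^{3/2}(2-v)^{-3/2}$ so that inequality \eqref{k} holds and $\td h^2\le 8(2-\De_{f_t})^{-3}$ on the graph, and then read off the estimate from Theorem~\ref{t2} after absorbing the factor $8$ into $C(n)$. One small remark: the appeal to Theorem~\ref{t1} to ``guarantee $\De_{f_t}<2$ is preserved'' is superfluous here, since Corollary~\ref{co1} already takes $\De_{f_t}<2$ for all $t\in[0,T]$ as a hypothesis (and it concerns local graphs over $B_R$, whereas Theorem~\ref{t1} is about entire graphs); everything else is exactly the bookkeeping you describe, with the identity $v\circ\g=\De_f$ being the only point that deserves to be stated.
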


Combing Corollary \ref{co1} and Theorem \ref{t1} yields
\begin{cor}
If the initial submanifold is an entire graph over $\R^n$, i.e.
$M_0=graph f_0$, and $\De_{f_0}<2$,
$(2-\De_{f_0})^{-1}=o(|x|^{2a})$, then we have the estimate
$$\sup_{(x,f_t(x))\in K(t,\th R)}|B|^2\leq
C(n)(1-\th^2)^{-2}(t^{-1}+R^{-2})(R^2+2nt+1)^{3a}.$$
Here $\th\in [0,1)$ and the denotation of $K(\ ,\ )$ is similar to Corollary \ref{co1}.
\end{cor}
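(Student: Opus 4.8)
The plan is to obtain this estimate by a straightforward concatenation of Theorem \ref{t1} and Corollary \ref{co1}; the only work is bookkeeping. First I would apply Theorem \ref{t1}: since $M_0=\text{graph}\,f_0$ is an entire graph over $\ir n$ with $\De_{f_0}<2$, the whole flow remains an entire graph, $M_t=\text{graph}\,f_t$ with $\De_{f_t}<2$ for every $t\ge 0$. The hypothesis $(2-\De_{f_0})^{-1}=o(|x|^{2a})$ in particular yields a finite constant $C_0>0$ with $(2-\De_{f_0})^{-1}(x)\le C_0(|x|^2+1)^a$ on all of $\ir n$ — one may take $C_0=\sup_{x}(2-\De_{f_0})^{-1}(x)\,(|x|^2+1)^{-a}$, which is finite precisely because this positive continuous ratio tends to $0$ at infinity by the little-$o$ decay. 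Hence the growth part of Theorem \ref{t1} applies and gives $(2-\De_{f_t})^{-1}(x)\le 2C_0(|x|^2+2nt+1)^a$ for all $t\ge 0$.

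Next, because each $M_t$ is a global graph with $\De_{f_t}<2$, the hypotheses of Corollary \ref{co1} are satisfied for every $R>0$, every $T>0$ and every $\th\in[0,1)$. Applying it gives
$$\sup_{(x,f_t(x))\in K(t,\th R)}|B|^2\le C(n)(1-\th^2)^{-2}(t^{-1}+R^{-2})\ \sup_{s\in[0,t]}\ \sup_{(x,f_s(x))\in K(s,R)}(2-\De_{f_s})^{-3}.$$
I would then bound the double supremum: on $K(s,R)$ one has $|x|\le R$, so $(2-\De_{f_s})^{-1}(x)\le 2C_0(R^2+2ns+1)^a\le 2C_0(R^2+2nt+1)^a$ since $s\le t$; cubing and taking the supremum over $s\in[0,t]$ gives $\sup_{s}\sup_{K(s,R)}(2-\De_{f_s})^{-3}\le (2C_0)^3(R^2+2nt+1)^{3a}$. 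Substituting this back into the previous display and absorbing the harmless factor $(2C_0)^3$ (equivalently, replacing $C(n)$ by $8C_0^3\,C(n)$) produces exactly
$$\sup_{(x,f_t(x))\in K(t,\th R)}|B|^2\le C(n)(1-\th^2)^{-2}(t^{-1}+R^{-2})(R^2+2nt+1)^{3a}.$$

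There is no genuine obstacle here: the substance is entirely carried by Theorem \ref{t1} (confinability of the entire-graph condition together with the controlled power growth of $(2-\De_{f_t})^{-1}$) and by Corollary \ref{co1} (the interior curvature estimate coming from Theorem \ref{t2} applied with $h=v^{3/2}(2-v)^{-3/2}$, whose Hessian satisfies the bound feeding Theorem \ref{t2}). The only points requiring a little care are the passage from the global growth estimate to a pointwise bound on the ball $B_R$ — which is immediate from $|x|\le R$ and monotonicity in $t$ — and the tracking of the constant arising from the little-$o$ hypothesis, which I would either absorb into $C(n)$ as above or keep explicit throughout.
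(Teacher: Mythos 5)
Your proof is correct and is exactly the argument the paper intends: the paper gives no details beyond stating ``Combining Corollary 5.1 and Theorem 4.1 yields\ldots,'' and your chain of reasoning---use the confinability and growth control from Theorem \ref{t1}, feed the resulting bound $(2-\De_{f_t})^{-1}\le 2C_0(|x|^2+2nt+1)^a$ into Corollary \ref{co1} via $|x|\le R$ on $K(s,R)$ and monotonicity in $s\le t$, then absorb $(2C_0)^3$ into the constant---is precisely the omitted bookkeeping. The only small point worth noting (which you do flag) is that the constant $C(n)$ in the final display secretly contains $8C_0^3$ and hence depends on $f_0$ and $a$, not just on $n$; this is a mild abuse of notation already present in the paper's statement.
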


Similarly, if
$$r=|\mathbf{x}|^2+2nt,$$
then it is easy to check that $r$ satisfies (\ref{co3}). Applying
Theorem \ref{t2} and Theorem \ref{t4} we have

\begin{cor}\label{co4}

Let $R>0, T>0$ be such that for any $t\in [0,T]$, if $x\in M_t$
satisfies $|F|^2+2nt\leq R^2$, then $\g(x)$ lies in an open geodesic ball
centered at a fixed point $P_0$ of radius $\f{\sqrt{2}}{4}\pi$ in $\grs{n}{m}$. Then
the following estimate holds for arbitrary $t\in [0,T]$ and $\th\in
[0,1)$
$$\sup_{x\in K(t,\th R)}|B|^2\leq
C(n)(1-\th^2)^{-2}t^{-1}\sup_{0\leq s\leq t}\sup_{x\in K(s,R)}(\f{\sqrt{2}}{4}\pi-\rho)^{-3}.$$
Here
$$K(s,R)=\{x\in M_s:|F|^2+2ns\leq R^2\}.$$
\end{cor}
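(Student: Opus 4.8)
The plan is to run the localized parabolic maximum principle of Ecker--Huisken \cite{E-H2} on $|B|^2$ weighted by $\td h^2$ and a space--time cutoff. Two evolution inequalities drive the estimate. From Lemma 3.1, $\D|B|^2\le -2|\n|B||^2+3|B|^4$. For $\td h=h\circ\g$, combining the composition formula (\ref{j}) with the Hessian bound (\ref{k}) and the identity $\sum_i|\g_*e_i|^2=|B|^2$ (the energy density of the Gauss map equals $|B|^2$, whence also $\sum_i\big(dh(\g_*e_i)\big)^2=|\n\td h|^2$) gives
\begin{equation*}
\D\td h=-\Hess(h)(\g_*e_i,\g_*e_i)\circ\g\le -3|B|^2\td h-\tfrac{3}{2}\td h^{-1}|\n\td h|^2 ,
\end{equation*}
and since $\td h\ge1$ this yields $\D\td h^2\le -6|B|^2\td h^2-5|\n\td h|^2$.

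First I would compute, using the product rule $\D(uv)=u\D v+v\D u-2\lan\n u,\n v\ran$ and the two inequalities above,
\begin{equation*}
\D(|B|^2\td h^2)\le -2\td h^2|\n|B||^2-3|B|^4\td h^2-5|B|^2|\n\td h|^2-4\td h\,\lan\n|B|^2,\n\td h\ran .
\end{equation*}
The decisive point is that the quartic term $3|B|^4$ of Lemma 3.1 and the quartic term produced by $\D\td h^2$ combine into the \emph{negative} $-3|B|^4\td h^2$: the coefficient $3$ in (\ref{k}) is exactly what dominates the constant $3$ multiplying $|B|^4$ in Lemma 3.1. A Cauchy--Schwarz splitting $|4\td h\lan\n|B|^2,\n\td h\ran|\le 2\td h^2|\n|B||^2+8|B|^2|\n\td h|^2$ cancels the good gradient term and leaves
\begin{equation*}
\D(|B|^2\td h^2)\le -3|B|^4\td h^2+3|B|^2|\n\td h|^2 .
\end{equation*}
To close, the term $3|B|^2|\n\td h|^2$ has to be absorbed into $-3|B|^4\td h^2$; via $|\n\td h|^2\le(|\n h|^2\circ\g)|B|^2$ this reduces to a pointwise bound $|\n h|\le C(n)\,h$ on the confined set $\g(\ol{M}_{t,R})\subset\V$, a consequence of the fine structure of the convex functions on $\grs{n}{m}$, and here the negative term $-\tfrac32\td h^{-1}|\n\td h|^2$ coming from the $h^{-1}dh\otimes dh$ part of (\ref{k}) provides the slack needed to make the constants fit. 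The outcome is an inequality $\D(|B|^2\td h^2)\le -c\,|B|^4\td h^2+\lan V,\n(|B|^2\td h^2)\ran$ for some $c=c(n)>0$ and a smooth vector field $V$.

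Next I would localize: set $\varphi=t\,(1-R^{-2}r)_+^{2}$ and $f=|B|^2\td h^2\varphi$. By construction $\varphi$ vanishes on the parabolic boundary of $\{r\le R^2\}\times[0,T]$ and $\varphi\ge t(1-\th^2)^2$ on $\ol{M}_{t,\th R}$; the structural bounds $|\De r|\le C(n)$, $|\n r|^2\le C(n)r$ of (\ref{co3}) give $\D\varphi\le 1+C(n)tR^{-2}$ and $|\n\varphi|^2\varphi^{-1}\le C(n)tR^{-2}$. Expanding $\D f$ and absorbing the cutoff cross-terms $|B|^2\td h^2\D\varphi$ and $-2\lan\n(|B|^2\td h^2),\n\varphi\ran$ into the negative quartic and the good gradient term, one arrives on the support of $\varphi$ at
\begin{equation*}
\D f\le -c\,\f{f^2}{\td h^2\varphi}+C(n)\big(1+tR^{-2}\big)\f{f}{\varphi}+\lan W,\n f\ran .
\end{equation*}
Evaluating at a space--time maximum $(x_0,t_0)$ of $f$ over $\{r\le R^2\}\times[0,t]$ — where, if $f\not\equiv0$, one has $\n f=0$, $\De f\le0$, $\partial_t f\ge0$, hence $0\le\D f$ — and dividing through by $f/\varphi$ gives $f(x_0,t_0)\le C(n)\,\td h^2(x_0,t_0)\,(1+t_0R^{-2})$. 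Since $t_0\le t$ and $\td h^2(x_0,t_0)\le\sup_{\ol{M}_{s,R},\,s\in[0,t]}\td h^2$, for $x\in\ol{M}_{t,\th R}$ we conclude
\begin{equation*}
|B|^2(x,t)=\f{f(x,t)}{\td h^2(x,t)\,\varphi(x,t)}\le\f{C(n)(1+tR^{-2})\sup\td h^2}{t(1-\th^2)^2}=C(n)(1-\th^2)^{-2}(t^{-1}+R^{-2})\sup\td h^2 ,
\end{equation*}
which is the assertion.

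I expect the main obstacle to be exactly the gradient-term absorption in the second step: showing that the positive $3|B|^2|\n\td h|^2$ — and, after localization, the cutoff cross-terms — can be controlled against the single negative quartic $-3|B|^4\td h^2$ with the precise numerical coefficients imposed by (\ref{k}), which forces a sharp bound relating $|\n h|$, $h$, and $\Hess(h)$ on $\g(\ol{M}_{t,R})$. This is where the fine geometry of the convex function $h$ on $\grs{n}{m}$ (responsible for the coefficients $3$ and $\tfrac32$), together with the confinement that keeps $\g(\ol{M}_{t,R})$ inside $\V$, must be used. The remaining steps — fixing the exponent in the cutoff and the Young's-inequality bookkeeping that produces exactly the dependence $C(n)(1-\th^2)^{-2}(t^{-1}+R^{-2})$ — are routine; the weak parabolic maximum principle applies because $f$ has compact spatial support in $\ol{M}_{t,R}$ and vanishes at $t=0$.
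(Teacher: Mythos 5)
Your proposal has a genuine gap at the step you yourself flag as ``the main obstacle'': the absorption of the surviving term $3|B|^2|\n\td h|^2$ into $-3|B|^4\td h^2$ via a bound $|\n h|\le C(n)\,h$. That bound is false for the class of convex functions actually in play. For instance, with $h=v^{3/2}(2-v)^{-3/2}$ on $\V_2$ one has $h'/h=3/\big(v(2-v)\big)\to\infty$ as $v\to 2^-$, and since $|\n v|$ is bounded away from zero near the level set $\{v=2\}$, the quotient $|\n h|/h$ is unbounded on $\V$. The same happens for $h=\sec^2(\sqrt 2\rho)$ as $\rho\to\frac{\sqrt 2}{4}\pi$, where $|\n h|/h=2\sqrt 2\tan(\sqrt 2\rho)\to\infty$. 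Since the constant in the corollary (and in Theorem \ref{t2}) must depend only on $n$, $\th$, and $\sup\td h$, and not on how close $\g(\ol M_{t,R})$ comes to $\p\V$, no estimate of the form $|\n h|\le C(n)h$ with a dimensional $C(n)$ is available; even $|\n h|\le C\,h^{3/2}$ would not close the argument, because one then needs $h\lesssim 1$.

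The root cause is your choice of weight. A short computation of the generic coefficient shows that for a weight $\varphi(\td h)$ the term multiplying $-|B|^2|\n\td h|^2$ in $\D(|B|^2\varphi)$ is $\f{3}{2}\varphi'\td h^{-1}+\varphi''-\f{3}{2}\varphi^{-1}(\varphi')^2$. For $\varphi=\td h^2$ this equals $-1$, producing a \emph{positive} $|B|^2|\n\td h|^2$ term that cannot be absorbed; for any power $\varphi=\td h^{1+\de}$ you face a trade-off (either this coefficient or the $|B|^4$ coefficient goes the wrong way). The paper chooses $\varphi(\td h)=\f{\td h}{1-k\td h}$ with $k>0$, for which one gets $3\varphi'\td h-3\varphi=3k\varphi^2$ and $\f32\varphi'\td h^{-1}+\varphi''-\f32\varphi^{-1}(\varphi')^2=\f{k}{2\td h(1-k\td h)^2}\varphi\ge 0$, i.e.\ both the quartic and the gradient coefficients have the right sign, and no bound on $|\n h|$ in terms of $h$ is needed at all. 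The parameter $k$ is then fixed as $\f12\inf\td h^{-1}$ over $\ol M_{t,R}$, $t\in[0,T]$, which is how the factor $\sup\td h^2$ appears in the final estimate. This is the key idea your argument is missing, and it is not a bookkeeping detail.

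Two smaller points. First, even after closing the estimate you obtain $(t^{-1}+R^{-2})$, while the corollary asserts $t^{-1}$; you omit the step (spelled out in the Remark following the corollary) that on $K(t,\th R)$ one has $2nt\le R^2$, hence $R^{-2}\le\f{1}{2n}t^{-1}$, so $R^{-2}$ may be absorbed into $t^{-1}$. Second, the overall structure of your localization and maximum-principle argument mirrors the paper's proof of Theorem~\ref{t2} (cutoff $\eta=(R^2-r)_+^2$, factor $t$, evaluate at a space--time maximum), so once the weight $\varphi$ is corrected your outline would indeed reduce to the paper's proof of that theorem, from which the corollary follows by taking $r=|\mathbf x|^2+2nt$ and the appropriate $h$ on the geodesic ball.
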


\begin{cor}
If the Gauss image of the initial complete submanifold $M_0$ is contained in an open
geodesic ball of radius $\f{\sqrt{2}}{4}\pi$ in $\grs{n}{m}$, and $(\f{\sqrt{2}}{4}\pi-\rho)^{-1}$
has growth
$$(\f{\sqrt{2}}{4}\pi-\rho)^{-1}=o(|F|^{2a}),$$
then we have the estimate
$$\sup_{x\in K(t,\th R)}|B|^2\leq C(n)(1-\th^2)^{-2}t^{-1}(R^2+1)^{3a}.$$
Here $\th\in [0,1)$ and the denotation of $K(\ ,\ )$ is similar to Corollary \ref{co4}.
\end{cor}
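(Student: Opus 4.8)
The plan is to deduce this curvature estimate by feeding the confinable property of Theorem~\ref{t4} into the interior estimate of Corollary~\ref{co4}, exactly as the graphical curvature corollary is obtained from Theorem~\ref{t1} and Corollary~\ref{co1}. First I would observe that the hypothesis on $M_0$ is literally the initial hypothesis of Theorem~\ref{t4}, so that theorem applies and gives, for every $t\ge 0$, two facts: (i) $\g(M_t)$ remains inside the open geodesic ball of radius $\f{\sqrt2}{4}\pi$ about $P_0$; and (ii) after fixing a constant $C_0>0$ with $\big(\f{\sqrt2}{4}\pi-\rho\big)^{-1}\le C_0(|F|^2+1)^a$ on $M_0$ --- such a $C_0$ exists because this function is continuous and $o(|F|^{2a})$ --- the growth bound propagates to $\big(\f{\sqrt2}{4}\pi-\rho\big)^{-1}\le 2C_0\big(|F|^2+2nt+1\big)^a$ on $M_t$.

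From (i) the set-inclusion hypothesis of Corollary~\ref{co4} is satisfied for every $R>0$ and $T>0$: if $x\in M_t$ has $|F|^2+2nt\le R^2$ then $\g(x)$ certainly lies in the ball. So I would fix arbitrary $R>0$, $T>0$, then $t\in(0,T]$ and $\th\in[0,1)$, and apply Corollary~\ref{co4} with the cutoff $r=|F|^2+2nt$, which satisfies (\ref{co3}): $\D r=0$ because $\D|F|^2=-2n$ cancels $\D(2nt)=2n$, and $|\n r|^2=4|F^\top|^2\le 4|F|^2\le 4r$ with $F^\top$ the tangential part of the position vector. This yields
\[
\sup_{x\in K(t,\th R)}|B|^2\le C(n)(1-\th^2)^{-2}t^{-1}\sup_{0\le s\le t}\ \sup_{x\in K(s,R)}\Big(\f{\sqrt2}{4}\pi-\rho\Big)^{-3},
\]
where $K(s,R)=\{x\in M_s:|F|^2+2ns\le R^2\}$. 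On $K(s,R)$ one has $|F|^2+2ns\le R^2$, so (ii) gives $\big(\f{\sqrt2}{4}\pi-\rho\big)^{-1}\le 2C_0(R^2+1)^a$, hence the double supremum above is at most $(2C_0)^3(R^2+1)^{3a}$; substituting and absorbing the fixed factor $(2C_0)^3$ into the constant produces the claimed bound $C(n)(1-\th^2)^{-2}t^{-1}(R^2+1)^{3a}$.

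Since Theorems~\ref{t2} and \ref{t4} and Corollary~\ref{co4} are already in hand, the argument is essentially bookkeeping and I do not anticipate a genuine obstacle; the only points that need care are the verification that $r=|F|^2+2nt$ really meets (\ref{co3}) with a dimensional constant (the same computation that precedes Corollary~\ref{co4}), and the passage from the qualitative hypothesis $\big(\f{\sqrt2}{4}\pi-\rho\big)^{-1}=o(|F|^{2a})$ to an honest inequality valid on all of $M_0$, which is what allows the growth-propagation clause of Theorem~\ref{t4} to be invoked verbatim. If one insists on keeping the constant in the conclusion purely dimensional, one can instead use the $o$-hypothesis to take $C_0$ arbitrarily close to $1$ outside a large compact set, at the cost of stating the estimate only for $R$ larger than some $R_1=R_1(M_0)$.
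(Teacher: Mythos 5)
Your proposal is correct and follows precisely the route the paper intends: the corollary is stated without proof, immediately after Corollary~\ref{co4} and in deliberate parallel with how the graphical curvature corollary is obtained by combining Theorem~\ref{t1} with Corollary~\ref{co1}, so feeding Theorem~\ref{t4} into Corollary~\ref{co4} is exactly the right bookkeeping. You also correctly flag a genuine imprecision in the paper: the little-$o$ hypothesis, combined with continuity of $\big(\tfrac{\sqrt2}{4}\pi-\rho\big)^{-1}$ on $M_0$ (which holds because the Gauss image lies in the \emph{open} ball), gives a bound $C_0(|F|^2+1)^a$ with $C_0=C_0(M_0)$ rather than a dimensional constant, so the factor $(2C_0)^3$ ought to appear in the conclusion; the paper's stated constant $C(n)$ is honest only if one either absorbs $C_0$ or restricts to $R\ge R_1(M_0)$ as you suggest.
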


\begin{rem}

When $x\in K(t,\th R)$,
$$2nt\leq |F|^2+2nt\leq \th^2 R^2\leq R^2,$$
so
$$R^{-2}\leq \f{1}{2n}t^{-1}.$$
Hence in the process of applying Theorem \ref{t2} to Corollary \ref{co4}, $t^{-1}+R^{-2}$ could be replaced by $t^{-1}$.
\end{rem}

\emph{Proof of Theorem \ref{t2}.}
Let $\varphi=\varphi(\td{h})$ be a smooth nonnegative function of
$\td{h}$ to be determined later, and $'$ denotes derivative with
respect to $\td{h}$, then  from  (\ref{B}), (\ref{j}) and (\ref{k})
we have
\begin{equation}
\aligned
\D|B|^2 \varphi
&=\D |B|^2\cdot\varphi+|B|^2\D \varphi-2\n|B|^2\cdot \n\varphi\\
&\leq\big(-2\big|\n
|B|\big|^2+3|B|^4\big)\varphi\\
&\hskip0.5in+|B|^2\big(\varphi'\D\td{h}-\varphi''|\n \td{h}|^2\big)
-2\n|B|^2\cdot \n\varphi\\
&\leq\big(-2\big|\n
|B|\big|^2+3|B|^4\big)\varphi-|B|^2\varphi'(3\td{h}|B|^2+\f{3}{2}\td{h}^{-1}|\n
\td{h}|^2)\\
&\hskip1.2in-|B|^2\varphi''|\n \td{h}|^2-2\n|B|^2\cdot \n\varphi\\
\endaligned\label{cu1}
\end{equation}
The last term can be estimated by
\begin{eqnarray}\label{cu2}
-2\n|B|^2\cdot \n\varphi&=&-\n|B|^2\cdot \n\varphi-\n|B|^2\cdot \n\varphi\nonumber\\
&=&-\varphi^{-1}\big(\n(|B|^2\varphi)-|B|^2\n\varphi\big)\cdot \n\varphi-2|B|\n |B|\cdot\n\varphi\nonumber\\
&\le&-\varphi^{-1}\n\varphi\cdot\n(|B|^2\varphi)+|B|^2\varphi^{-1}|\n \varphi|^2\nonumber\\
&&\hskip1in+2\big|\n |B|\big|^2\varphi+\f{1}{2}|B|^2\varphi^{-1}|\n \varphi|^2\nonumber\\
&=&-\varphi^{-1}\n\varphi\cdot\n(|B|^2\varphi)+2\big|\n
|B|\big|^2\varphi+\f{3}{2}|B|^2\varphi^{-1}|\n \varphi|^2.
\end{eqnarray}
Substituting (\ref{cu2}) into (\ref{cu1}) gives
\begin{eqnarray}\label{cu3}
\D |B|^2\varphi &\leq&-(3\varphi'\td{h}-3\varphi)|B|^4\nonumber\\
&-&\big(\f{3}{2}\varphi'\td{h}^{-1}+\varphi''
-\f{3}{2}\varphi^{-1}(\varphi')^2\big)|B|^2|\n\td{h}|^2-\varphi^{-1}\n\varphi\cdot \n(|B|^2\varphi).\nonumber\\
\end{eqnarray}
Now we let $\varphi(\td{h})=\f{\td{h}}{1-k\td{h}}$, $k\geq 0$ to
be chosen; then
\begin{eqnarray}
&&3\varphi'\td{h}-3\varphi=3k\varphi^2,\\
&&\f{3}{2}\varphi'\td{h}^{-1}+\varphi''-\f{3}{2}\varphi^{-1}(\varphi')^2=\f{k}{2\td{h}(1-k\td{h})^2}\varphi,\\
&&\varphi^{-1}\n\varphi=\f{1}{\td{h}(1-k\td{h})}\n\td{h}.
\end{eqnarray}
Substituting these identities into (\ref{cu3}) we derive for
$g=|B|^2\varphi$ the inequality
\begin{equation}
\D g\leq -3k
g^2-\f{k}{2\td{h}(1-k\td{h})^2}|\n\td{h}|^2g-\f{1}{\td{h}(1-k\td{h})}\n\td{h}\cdot
\n g.
\end{equation}
As in Lemma \ref{l1}, we define $\eta=(R^2-r)_+^2$, then on the
support of $\eta$,
\begin{eqnarray*}
\D\eta&=&-2(R^2-r)\D r-2|\n r|^2\\
&\leq&2C(n)R^2-2|\n r|^2
\end{eqnarray*}
and
\begin{eqnarray}\label{cu4}
\D g\eta&=&\D g\cdot \eta+g\D \eta-2\n g\cdot\n\eta\nonumber\\
&\leq&-3k g^2\eta-\f{k}{2\td{h}(1-k\td{h})^2}|\n\td{h}|^2g\eta
-\f{1}{\td{h}(1-k\td{h})}\n\td{h}\cdot \n g\cdot\eta\nonumber\\
&&\qquad +2C(n)R^2g-2g|\n r|^2-2\n g\cdot\n \eta;
\end{eqnarray}
where
\begin{eqnarray}\label{cu5}
-2\n g \cdot\n\eta&=&-2\eta^{-1}\n\eta\cdot\n(g\eta)+2g\eta^{-1}|\n\eta|^2\nonumber\\
&=&-2\eta^{-1}\n\eta\cdot\n(g\eta)+8g|\n r|^2
\end{eqnarray}
and
\begin{eqnarray}\label{cu6}
&&-\f{1}{\td{h}(1-k\td{h})}\n\td{h}\cdot \n g\cdot\eta\nonumber\\
&=&-\f{1}{\td{h}(1-k\td{h})}\n\td{h}\cdot \n (g\eta)+\f{1}{\td{h}(1-k\td{h})}\n\td{h}\cdot  g\n\eta\nonumber\\
&\leq&-\f{1}{\td{h}(1-k\td{h})}\n\td{h}\cdot \n (g\eta)+\f{k}{2\td{h}(1-k\td{h})^2}|\n\td{h}|^2g\eta
+\f{1}{2k\td{h}}g\eta^{-1}|\n\eta|^2\nonumber\\
&=&-\f{1}{\td{h}(1-k\td{h})}\n\td{h}\cdot \n
(g\eta)+\f{k}{2\td{h}(1-k\td{h})^2}|\n\td{h}|^2g\eta+\f{2}{k\td{h}}g|\n
r|^2.
\end{eqnarray}
Substituting (\ref{cu5}) and (\ref{cu6}) into (\ref{cu4}) gives
\begin{eqnarray}\label{cu7}
\D g\eta&\leq&
-3kg^2\eta-\big(2\eta^{-1}\n\eta+\f{1}{\td{h}(1-k\td{h})}\n\td{h}\big)\cdot
\n(g\eta)\nonumber\\
&&\hskip1in+C(n)\big[(1+\f{1}{k\td{h}})r+R^2\big]g.
\end{eqnarray}
Furthermore
\begin{eqnarray}\label{cu8}
\D (tg\eta)&\leq&
-3ktg^2\eta-\big(2\eta^{-1}\n\eta+\f{1}{\td{h}(1-k\td{h})}\n\td{h}\big)\cdot
\n(tg\eta)\nonumber\\
&&\hskip1in+C(n)\big[(1+\f{1}{k\td{h}})r+R^2\big]t g+g\eta.
\end{eqnarray}
Denote
$$m(T)=\sup_{0\leq t\leq T}\sup_{\ol{M}_{t,R}}tg\eta=t_0g(x_0,t_0)\eta(x_0,t_0),$$
then $t_0>0$, $r(x_0,t_0)<R^2$ and hence
$$\D (tg\eta)\geq 0,\qquad \n(tg\eta)=0$$
at $(x_0,t_0)$. (\ref{cu8}) implies
$$3kt_0g^2\eta\leq C(n)\big[(1+\f{1}{k\td{h}})r+R^2\big]t_0 g+g\eta.$$
Multiplying by $\f{t_0\eta}{3k}$ yields
\begin{eqnarray*}
m(T)^2&\leq&\f{C(n)}{3k}(1+\f{1}{k\td{h}})R^2t_0^2g\eta+\f{t_0g\eta^2}{3k}\\
&\leq&\f{C(n)}{3k}\big((1+\f{1}{k\td{h}})R^2 T+\eta\big)m(T);
\end{eqnarray*}
by $\eta=(R^2-r)_+^2\leq R^4$ we arrive at
$$g\eta T\leq m(T)\leq \f{C(n)}{3k}\big((1+\f{1}{k\td{h}})R^2 T+R^4\big)$$
in $\ol{M}_{T,R}$. Let now
\begin{equation}
k=\f{1}{2}\inf_{x\in \ol{M}_{t,R},t\in [0,T]}\td{h}^{-1}.
\end{equation}
Since $\varphi=\f{\td{h}}{1-k\td{h}}\geq \f{1}{1-k}\geq 1$ (by $\td{h}\geq 1$) and $\eta\geq (1-\th^2)^2
R^4$ in $\ol{M}_{T,\th R}$, we have
\begin{equation}\label{cu9}
\sup_{x\in \ol{M}_{T,\th R}}|B|^2\leq
C(n)(1-\th^2)^{-2}(T^{-1}+R^{-2})\sup_{t\in
[0,T]}\sup_{x\in \ol{M}_{t,R}}\td{h}^2
\end{equation}
and finally (\ref{cu0}) follows from replacing $T$ by $t$,
replacing $t$ by $s$ in (\ref{cu9}). $\hfill\Box$

Substituting $\varphi=\td h$ into (\ref{cu3}) gives 

$$\D |B|^2\td h\le -\td h^{-1}\n\td h\cdot \n(|B|^2\\td h).$$
Using the parabolic maximum principle for complete manifolds in 
\cite{E-H1}, we have

\begin{cor}
Let $M$ be a complete $n-$submanifold in $\ir{m+n}$ with bounded 
curvature. Then
$$\sup_{M_t}|B|^2\td h\le \sup_{M_0}|B|^2\td h.$$
\end{cor}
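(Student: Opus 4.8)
The plan is to promote the inequality $\D(|B|^2\td h)\le-\td h^{-1}\n\td h\cdot\n(|B|^2\td h)$ --- which is just the $k=0$ case, $\varphi(\td h)=\td h$, of (\ref{cu3}) recorded immediately above --- to a global statement by means of a maximum principle, in place of the local cutoff $\eta$ that was used in the proof of Theorem \ref{t2}. Indeed, with $\varphi=\td h$ one has $\varphi'=1$, $\varphi''=0$, hence $3\varphi'\td h-3\varphi=0$ and $\f32\varphi'\td h^{-1}+\varphi''-\f32\varphi^{-1}(\varphi')^2=0$, while $\varphi^{-1}\n\varphi=\td h^{-1}\n\td h$; so the nonnegative function $u:=|B|^2\td h$ satisfies the linear parabolic differential inequality $\D u+X\cdot\n u\le0$ with drift field $X=\td h^{-1}\n\td h$ and no zero-order term. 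Since no curvature-type reaction term survives, one expects $\sup u$ to be non-increasing in $t$.

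Next I would feed this inequality into the maximum principle for parabolic equations on complete, possibly noncompact, manifolds from \cite{E-H1}, whose hypotheses reduce to growth control on $u$ and on the coefficient $X$. This is exactly where the bounded curvature assumption enters. Since $M$ has bounded second fundamental form and the flow exists with bounded curvature on the interval $[0,T]$ under consideration, $\sup_{M_t}|B|^2<\infty$ there; since $\g(M_t)$ stays inside the domain $\V$ of $h$ (guaranteed by the confinable property of Section 4 when $\V$ is one of the convex sublevel sets of Section 2), $\td h$ is bounded; hence $u=|B|^2\td h$ is bounded. For the drift, a standard computation relating the differential of the Gauss map to the second fundamental form gives $|\n\td h|\le(\sup_{\V}\|dh\|)\,|B|$, whence $|X|=\td h^{-1}|\n\td h|$ is bounded too (using $\td h\ge1$). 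With $u$, $X$ and the vanishing zero-order term all bounded on $M\times[0,T]$, the Ecker--Huisken maximum principle yields $\sup_{M_t}u\le\sup_{M_0}u$, which is the asserted estimate.

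The main obstacle is this last verification, not the algebra of (\ref{cu3}). The noncompact parabolic maximum principle of \cite{E-H1} is not automatic: it requires that $u$ and the drift not grow too fast at spatial infinity, so the real content is converting the qualitative ``bounded curvature'' hypothesis into the quantitative bounds on $|B|^2$, on $\td h$, and on $\td h^{-1}|\n\td h|$ used above, together with the preservation along the flow of the confinement $\g(M_t)\subset\V$ that keeps $\td h$ globally defined and bounded. Granting those, the conclusion follows at once from (\ref{cu3}) and \cite{E-H1}.
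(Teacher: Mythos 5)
Your proof follows exactly the paper's route: substituting $\varphi=\td h$ into (\ref{cu3}) so that both the $|B|^4$ and the $|B|^2|\n\td h|^2$ coefficients vanish identically, leaving the pure drift inequality $\D(|B|^2\td h)\le -\td h^{-1}\n\td h\cdot\n(|B|^2\td h)$, and then invoking the Ecker--Huisken parabolic maximum principle for complete noncompact manifolds. The extra paragraph you add --- checking that bounded curvature plus $\td h\ge 1$ and the preserved confinement $\g(M_t)\subset\V$ give the boundedness of $u=|B|^2\td h$ and of the drift $\td h^{-1}\n\td h$ via $|\n\td h|\le\|dh\|\,|B|$, as required to apply \cite{E-H1} --- is a correct and useful explication of what the paper leaves implicit when it simply cites the maximum principle.
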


\begin{rem}
When $\V$ is a geodesic ball of radius 
$\rho_0<\f{\sqrt{2}}{4}\pi$, we can choose 
$h=\sec^2(\sqrt{2}\rho)$. So the above estimate is an improvement 
of Thm. 4.2 in \cite{X3}.
\end{rem}

Furthermore, we can give a prior estimates for $|\n^m B|^2$ by 
induction.

\begin{thm}\label{t3}
Our denotation and assumption is similar to Theorem \ref{t2},
then for arbitrary $m\geq 0$, $\th\in [0,1)$ and $t\in [0,T]$, we
have the estimate
$$\sup_{x\in \ol{M}_{t,\th R}}|\n ^m B|^2\leq c_m(R^{-2}+t^{-1})^{m+1};$$
where $c_m=c_m(\th,n,\sup_{\ol{M}_{s,R},s\in [0,t]}\td{h})$.
\end{thm}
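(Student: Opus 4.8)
The plan is to prove Theorem \ref{t3} by induction on $m$, following the same strategy used in the proof of Theorem \ref{t2} (which is precisely the case $m=0$, after noting that $\td h \le \sup_{\ol M_{s,R}} \td h$ converts the $\td h^2$ factor into the constant $c_0$). The inductive hypothesis will be that on the relevant parabolic regions one controls $|\n^j B|^2$ by $c_j(R^{-2}+t^{-1})^{j+1}$ for all $j \le m-1$. To run the induction I first need the evolution inequality for $|\n^m B|^2$, which is the standard higher-order Simons-type formula: schematically
$$\D |\n^m B|^2 \le -2|\n^{m+1}B|^2 + C\sum_{i+j+k=m} |\n^i B|\,|\n^j B|\,|\n^k B|\,|\n^m B| + C\sum_{i+j=m}|\n^i B|\,|\n^j B|\,|\n^m B|,$$
where the cubic terms come from differentiating the reaction term $|B|^4$ in (\ref{B}) and the extra curvature commutators in $\ir{m+n}$. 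Using Young's inequality together with the inductive bounds for the lower-order terms, the right-hand side can be absorbed into $-|\n^{m+1}B|^2 + C\,(R^{-2}+t^{-1})^{m+1}|\n^m B|^2 + (\text{lower order, already controlled})$, reducing the problem to a De Giorgi--Nash--Moser-type iteration in which the "dangerous" coefficient in front of $|\n^m B|^2$ is a constant multiple of $(R^{-2}+t^{-1})^{m+1}$.

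The core of the argument is then the cutoff construction. As in the proof of Theorem \ref{t2}, I would take the space-time cutoff built from $\eta = (R^2 - r)_+^2$ together with the Gauss-image weight $\td h$; here one should use the nested radii $\th_m < \th_{m-1} < \cdots < \th_0 = 1$ with $\th$ fixed, so that each step of the induction is localized strictly inside the region where the previous step was proved, leaving a definite gap $(1-\th^2)$-type factor at each stage (all such factors are swallowed into $c_m$). For the quantity $G = |\n^m B|^2$ one forms $t^{m+1} G \eta \, \varphi$ with $\varphi = \td h/(1 - k\td h)$ and $k = \tfrac12 \inf \td h^{-1}$ exactly as before; the purpose of $\varphi$ is again to produce, via (\ref{k}) and the $-3kg^2$ term in (\ref{cu3})-type computations applied now at order $m$, a good quadratic term $-k\,t^{m+1}G^2\eta\varphi$ which, at an interior maximum, dominates the linear-in-$G$ terms carrying the weight $(R^{-2}+t^{-1})^{m+1}$ and the lower-order contributions. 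Evaluating $\D(t^{m+1}G\eta\varphi) \ge 0$ and $\n(t^{m+1}G\eta\varphi)=0$ at the maximum point and solving the resulting quadratic inequality for $m(T):=\sup t^{m+1}G\eta\varphi$ yields $m(T) \le c_m R^4$ with $c_m$ depending only on $\th$, $n$, $\sup \td h$ and the already-established constants $c_0,\dots,c_{m-1}$; restricting to $\ol M_{T,\th R}$ where $\eta \ge (1-\th^2)^2 R^4$ and $\varphi \ge 1$, and dividing by $T^{m+1}$, gives the claimed bound.

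The main obstacle is bookkeeping rather than a new idea: one must verify that when $\D(t^{m+1}G\eta\varphi)$ is expanded, every term that is not manifestly of the form $-(\text{nonneg})$, not a divergence-type term killed at the maximum, or not already controlled by the inductive hypothesis, is linear in $G$ with coefficient at most $C_m(R^{-2}+t^{-1})^{m+1}$ times something bounded — and crucially that the lower-order cubic cross terms like $|\n^i B||\n^j B||\n^m B|$ with $i+j=m$, $i,j<m$, really do produce only $(R^{-2}+t^{-1})^{(i+1)/2+(j+1)/2} = (R^{-2}+t^{-1})^{(m+2)/2}$ growth, which is subcritical relative to the $(R^{-2}+t^{-1})^{m+1}$ scale for $m\ge 1$ and hence harmless after Young's inequality. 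The factor $t^{m+1}$ (rather than $t$) is exactly what makes the time-derivative error term $(m+1)t^m G\eta\varphi$ reproduce, after multiplication by $t$, the correct power, so one must also check that this term is absorbed — it contributes a term of order $t^m$, which against the quadratic $-k\,t^{2(m+1)}G^2$ and after the standard rearrangement gives $m(T)\lesssim \max(\cdots, T^{m+1})$, consistent with the stated estimate. Once the scaling of every term is pinned down, the maximum-principle step is identical in form to the one already carried out for Theorem \ref{t2}.
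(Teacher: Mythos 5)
There is a genuine gap in your argument, and it lies exactly where the real work needs to happen: the production of the quadratic-in-$G$ term at an interior maximum.

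In the proof of Theorem~\ref{t2}, the term $-3k g^2$ in (\ref{cu3}) arises from a fortunate cancellation: the reaction term $3|B|^4$ in the evolution inequality (\ref{B}) is exactly matched against $-3\varphi'\td h|B|^2$, which comes from $\Hess(h)(\g_*e_i,\g_*e_i)\ge 3h|B|^2$ (since the energy density of the Gauss map equals $|B|^2$). Both pieces are multiples of $|B|^4$, so subtracting them yields $-3k\varphi^2|B|^4=-3kg^2$. When you try to run the same computation at order $m\ge 1$, this structure disappears. The Gauss-image weight $\varphi(\td h)$ still contributes $-\varphi'\cdot 3\td h|B|^2|\n^m B|^2$ via $\D\td h\le -3\td h|B|^2-\tfrac32\td h^{-1}|\n\td h|^2$, which is linear in $G=|\n^m B|^2$ (with the inductively bounded factor $|B|^2$ in front, which could be small), not quadratic. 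Meanwhile the reaction term in $\D|\n^m B|^2$ is of the schematic form $\sum_{i+j+k=m}|\n^iB||\n^jB||\n^kB||\n^m B|$, which is at worst $|B|^2|\n^m B|^2$ — also linear in $G$, not a multiple of $G^2$. So the $-kG^2$ term you need at the maximum point is simply not available from the weight $\varphi$. Your maximum-principle step would then leave you with only linear-in-$G$ terms at the maximum, which cannot close.

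The paper circumvents this with an entirely different auxiliary function, namely $f=\psi^{m+1}|\n^m B|^2\big(\La+\psi^m|\n^{m-1}B|^2\big)$ with $\psi=(R^{-2}+t^{-1})^{-1}$ and $\La=7c_{m-1}+1$. The Gauss-image weight $\varphi$ does not appear at all for $m\ge 1$; $\td h$ enters only through the inductive constants $c_k$. The crucial point is that the evolution inequality for $\psi^m|\n^{m-1}B|^2$ contains the good gradient term $-2\psi^m|\n^m B|^2$; multiplying this against $\psi^{m+1}|\n^m B|^2$ in the product rule produces $-2\psi^{2m+1}|\n^m B|^4$, which is the genuine quadratic term. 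The cross term $-2\psi^{2m+1}\n|\n^m B|^2\cdot\n|\n^{m-1}B|^2$ is then absorbed by choosing $\La$ large relative to $c_{m-1}$, and the remaining terms are controlled by induction and Young's inequality. Your proposal, as written, would need to be replaced by this (Shi-type) auxiliary-function device; the nested-radii cutoff and the inductive framing you describe are fine, but the heart of the argument is missing.
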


\begin{proof}
We proceed by induction on $m$. The case $m=0$ has been
established as Theorem \ref{t2}. Now we suppose the inequality
holds for $0\leq k\leq m-1$. Denote
$\psi(t)=(R^{-2}+t^{-1})^{-1}=\f{R^2 t}{R^2+t}$; we shall
estimate the upper bound of $\psi^{m+1}|\n^m B|^2$ on
$\ol{M}_{T,\th R}$ for fixed $\th\in [0,1)$.

By computing
\begin{eqnarray}\label{cu10}
&&\D\psi^{m+1}|\n^m B|^2 \leq-2\psi^{m+1}|\n^{m+1}
B|^2+\big(\f{d}{dt}\psi^{m+1}\big)|\n^m
B|^2\nonumber\\
&&\hskip1in+C(m,n)\psi^{m+1}\sum_{i+j+k=m,i\leq j\leq k}|\n^i
B||\n^j B||\n^k B||\n^m B|.
\end{eqnarray}
By inductive assumption
$$\sup_{x\in \ol{M}_{t,\f{1+\th}{2}R}}\psi^{k+1}|\n^k B|^2\leq c_k$$
for every $0\leq k\leq m-1$ and $t\in [0,T]$, where
$$c_k=c_k(\th,n,\sup_{x\in \ol{M}_{t,R},t\in [0,T]}\td{h})$$
(note that $c_k$ depends on $\f{1+\th}{2}$, which only depends on
$\th\in [0,1)$); which implies $|\n^i B|\leq
c_i^{1/2}\psi^{-(i+1)/2}$, $|\n^j B|\leq
c_j^{1/2}\psi^{-(j+1)/2}$; moreover
\begin{eqnarray}\label{cu11}
&&\psi^{m+1}\sum_{i+j+k=m,i\leq j\leq k}|\n^i B||\n^j B||\n^k B||\n^m B|\nonumber\\
&&\hskip1in\leq C\sum_{i+j+k=m,i\leq j\leq k}\psi^{\f{k+m}{2}}|\n^k B||\n^m B|\nonumber\\
&&\hskip1in\leq C\sum_{k\leq m}\psi^k|\n^k B|^2.
\end{eqnarray}
On the other hand
\begin{eqnarray}\label{cu12}
\f{d}{dt}\psi^{m+1}=(m+1)\psi^m\f{R^4}{(R^2+t)^2}\leq (m+1)\psi^m.
\end{eqnarray}
Substituting (\ref{cu11}) and (\ref{cu12}) into (\ref{cu10}) gives
\begin{eqnarray}
\D \psi^{m+1}|\n^m B|^2\leq -2\psi^{m+1}|\n
^{m+1}B|^2+C\sum_{k\leq m}\psi^k|\n^k B|^2
\end{eqnarray}
on $\ol{M}_{t,\f{1+\th}{2}R}$ for arbitrary $t\in [0,T]$; where
$$C=C(\th,n,\sup_{x\in \ol{M}_{t,R},t\in [0,T]}\td{h}).$$
Now we define $f=\psi^{m+1}|\n^m B|^2(\La+\psi^m|\n^{m-1} B|^2)$,
where $\La>0$ to be chosen later. By computing
\begin{eqnarray}
\D f&\leq&-2\pB{m+1}{m+1}(\La+\pB{m}{m-1})\nonumber\\
&&+C\sum_{k\leq m}\pB{k}{k}(\La+\pB{m}{m-1})\nonumber\\
&&-2\psi^{2m+1}|\n^m B|^4+C\sum_{k\leq m-1}\pB{k}{k}\pB{m+1}{m}\nonumber\\
&&-2\psi^{2m+1}\n|\n^m B|^2\cdot \n|\n^{m-1}B|^2;
\end{eqnarray}
where the last term can be estimated by
\begin{eqnarray}
&&-2\psi^{2m+1}\n |\n^m B|^2\cdot \n|\n^{m-1}B|^2\nonumber\\
&&\hskip0.5in =-8\psi^{2m+1}|\n^m B|\n|\n^m B|\cdot |\n^{m-1}B|\n|\n^{m-1}B|\nonumber\\
&&\hskip0.5in \leq 2\pB{m+1}{m+1}(\La+\pB{m}{m-1})+8\psi^{2m+1}|\n^m B|^4\f{\pB{m}{m-1}}{\La+\pB{m}{m-1}}\nonumber\\
&&\hskip1in\leq
2\pB{m+1}{m+1}(\La+\pB{m}{m-1})+\f{8c_{m-1}}{\La+c_{m-1}}\psi^{2m+1}|\n^m
B|^4.
\end{eqnarray}
Hence we derive
\begin{eqnarray}
\D f&\leq&-(2-\f{8c_{m-1}}{\La+c_{m-1}})\psi^{-1}\big(\pB{m+1}{m}\big)^2\nonumber\\
&&+C\psi^{-1}\big(\sum_{k\leq m}\pB{k+1}{k}(\La+\pB{m}{m-1})\nonumber\\
&&\hskip1in+\sum_{k\leq m-1}\pB{k+1}{k}\pB{m+1}{m}\big).
\end{eqnarray}
Now we let $\La=7c_{m-1}+1$, then
$$\D f\leq -\psi^{-1}(\La+\pB{m}{m-1})^{-2}f^2+C\psi^{-1}(1+f);$$
by Young's inequality,
\begin{eqnarray*}
Cf&\leq&\f{1}{2}(\La+\pB{m}{m-1})^{-2}f^2+\f{1}{2}C^2(\La+\pB{m}{m-1})^2\\
&\leq&\f{1}{2}(\La+\pB{m}{m-1})^{-2}f^2+\f{1}{2}C^2(8c_{m-1}+1)^2;
\end{eqnarray*}
hence we have
\begin{eqnarray}
\D f\leq -\psi^{-1}(\de f^2-C);
\end{eqnarray}
where
$$\de=\f{\big(C(8c_{m-1}+1)^2-1\big)^2}{2(8c_{m-1}+1)^2}>0$$
and $C$ is a positive constant depending on
$n,m,\sup_{\ol{M}_{t,R},t\in [0,T]}\td{h}$.

Now we let $\varphi=\big(\f{1+\th}{2}R\big)^2-r$, and
$\eta=(\varphi_+)^2$, then $\eta$ is a nonnegative function which
vanishes outside $\ol{M}_{t,\f{1+\th}{2}R}$; similar to
(\ref{cu4}), we can derive
\begin{eqnarray}
\D f\eta\leq \psi^{-1}(\de f^2-C)\eta+C(n)R^2
f-2\eta^{-1}\n\eta\cdot \n(f\eta)
\end{eqnarray}
on $\ol{M}_{t,\f{1+\th}{2}R}$. Denote $m(T)=\max_{0\leq t\leq
T}\max_{x\in \ol{M}_{t,\f{1+\th}{2}R}}f\eta=f\eta(x_0,t_0)$, we
have
$$f^2 \eta\leq \f{1}{\de}\big(C\eta+C(n)R^2f\psi\big).$$
Multiplying by $\eta$, using $\eta\leq R^4$, $\psi\leq R^2$, we
have
\begin{eqnarray*}
f^2\eta^2&\leq&\f{1}{\de}(C\eta^2+C(n)R^2f\eta\psi)\leq \f{1}{\de}\big(CR^8+C(n)R^4f\eta\big)\\
&\leq&\f{1}{\de}\big(CR^8+\f{\de}{2}f^2\eta^2+\f{C(n)^2R^8}{2\de}\big)
\end{eqnarray*}
i.e., $m(T)^2=f^2\eta^2\leq CR^8$,
$$\sup_{0\leq t\leq T}\sup_{x\in \ol{M}_{t,\f{1+\th}{2}R}}f\eta\leq CR^4;$$
where $C=C(\th,n,m,\sup_{\ol{M}_{t,R},t\in [0,T]}\td{h})$.

Finally, since $\eta=\big((\f{1+\th}{2}R)^2-(\th
R)^2\big)^2=\f{1+2\th-3\th^2}{4}R^4$ on $\ol{M}_{T,R}$ and
$\La+\pB{m}{m-1}\geq 7c_{m-1}+1$, we have
\begin{eqnarray}
\sup_{x\in\ol{M}_{T,\th R}}\pB{m+1}{m}\leq c_m(\th,n,\sup_{x\in
\ol{M}_{t,R},t\in [0,T]}\td{h}).
\end{eqnarray}
and the conclusion follows from replacing $T$ by $t$ and
replacing $t$ by $s$.
\end{proof}

\bibliographystyle{amsplain}

\end{document}